\newtheorem{theorem}{Theorem}[section]
\newtheorem{corollary}[theorem]{Corollary}
\theoremstyle{definition}
\newtheorem{definition}[theorem]{Definition}
\newtheorem{example}[theorem]{Example}
\theoremstyle{remark}
\newtheorem{remark}[theorem]{Remark}
\numberwithin{equation}{section}
\begin{document}

\setcounter{page}{1}

\title[Disjoint hypercyclic]{Disjoint hypercyclic weighted pseudo-shift operators generated by different shifts}

\author[Y. Wang, \MakeLowercase{and} Z.H. Zhou]{Ya Wang,$^1$ \MakeLowercase{and} Ze-Hua Zhou$^2$$^{*}$}

\address{$^{1}$School of Mathematics, Tianjin University, Tianjin 300354, P.R. China.}
\email{\textcolor[rgb]{0.00,0.00,0.84}{wangyasjxsy0802@163.com}}

\address{$^{2}$School of Mathematics, Tianjin University, Tianjin 300354, P.R. China.}
\email{\textcolor[rgb]{0.00,0.00,0.84}{zehuazhoumath@aliyun.com;zhzhou@tju.edu.cn}}


\let\thefootnote\relax\footnote{Copyright 2016 by the Tusi Mathematical Research Group.}

\subjclass[2010]{Primary 47A16; Secondary 47B38, 46E15.}

\keywords{disjoint hypercyclic, disjoint supercyclic, weighted pseudo-shifts, operator weighted shifts, Banach space.}

\date{Received: xxxxxx; Revised: yyyyyy; Accepted: zzzzzz.
\newline \indent $^{*}$Corresponding author.\\
The work was supported in part by the National Natural Science Foundation of
China (Grant Nos. 11771323; 11371276).}

\begin{abstract}
Let $I$ be a countably infinite index set, and let $X$ be a Banach sequence space over $I.$ In this article, we characterize disjoint hypercyclic and supercyclic weighted pseudo-shift operators on $X$ in terms of the weights, the OP-basis, and the shift mappings on $I.$ Also, the shifts on weighted $L^p$ spaces of a directed tree and the operator weighted shifts on $\ell^2(\mathbb{Z,\mathcal{K}})$ are investigated as special cases.
\end{abstract} \maketitle

\section{Introduction and preliminaries}

The study of disjointness in hypercyclicity was initiated in 1970 by Bernal-Gonz\'{a}lez \cite{B1} and B\`{e}s and Peris \cite{BP}, respectively. Since then, disjoint hypercyclicity was investigated by many authors, we recommend \cite{BMP}, \cite{BMS}, \cite{BMPS}, \cite{Sh} and \cite{SA}  for recent works on this subject.

The new notions, disjoint hypercyclic operator and disjoint supercyclic operator are derived from the much older notions of hypercyclic operator and supercyclic operator in linear dynamics. Let $X$ be a separable infinite dimensional complex Banach space, we denote by $L(X)$ the set of all continuous and linear operators on $X.$ An operator $T\in L(X)$ is said to be \emph{hypercyclic} if there is some vector $x\in X$ such that the \emph{orbit} $\mathrm{Orb}(T,x)=\{T^nx : n\in \mathbb{N}\}$ (where $\mathbb{N} = \{0, 1, 2, 3, \ldots\}$) is dense in $X$. In such a case, $x$ is called a \emph{hypercyclic vector} for $T.$ Similarly, $T$ is said to be \emph{supercyclic} if there exists an $x\in X$ such that $\mathbb{C}\cdot\mathrm{Orb}(T,x)=\{\lambda T^nx : n\in \mathbb{N}, \lambda\in \mathbb{C}\}$ is dense in $X.$ For the background about hypercyclicity and supercyclicity we refer to the excellent monographs by Bayart and Matheron \cite{BM} and by Grosse-Erdmann and Peris Manguillot \cite{GM}.

 $N \geq 2,$ hypercyclic (respectively, supercyclic) operators $T_1,  T_2, \ldots, T_N $ acting on the same space $X$ are said to be \emph{disjoint} or \emph{d-hypercyclic}(respectively, \emph{d-supercyclic}) if their direct sum $\oplus_{m=1}^{N}T_m$ has a hypercyclic (respectively, supercyclic) vector of the form $(x, x, \cdots, x)$ in $X^N.$ $x$ is called a \emph{d-hypercyclic}(respectively, \emph{d-supercyclic}) vector for $T_1,  T_2, \ldots, T_N.$ If the set of d-hypercyclic (respectively, d-supercyclic) vectors is dense in $X,$ we say $T_1, T_2, \ldots, T_N$ are
 \emph{densely d-hypercyclic}(respectively, \emph{densely d-supercyclic}).

 In the study of linear dynamics, one large source of examples is the class of weighted shifts. In \cite{S1} and \cite{SH}, Salas characterized the hypercyclic and supercyclic weighted shifts on $\ell^p(\mathbb{Z}) \; (1\leq p < \infty)$ respectively.
 The characterizations for weighted shifts on $\ell^p(\mathbb{Z})\; (1\leq p < \infty)$ to be disjoint hypercyclic and disjoint supercyclic were provided in \cite{BP}, \cite{MO} and \cite{LZ}. As generalizations of weighted shifts on $\ell^p(\mathbb{Z})\; (1\leq p < \infty)$, Grosse-Erdmann \cite{GE} studied the hypercyclicity of weighted pseudo-shifts on F-sequence spaces.  Hazarika and Arora considered the hypercyclic operator weighted shifts on $\ell^2(\mathbb{Z},\mathcal{K})$ in \cite{HA}. And the equivalent conditions for the weighted pseudo-shifts and operator weighted shifts to be supercyclic were obtained in \cite{LZ2}. Inspired by these results, in \cite{WZ} we characterized the disjoint hypercyclic powers of weighted pseudo-shifts as below.

 \begin{theorem}\label{d-hyper1}
 $[20, \mbox{ Theorem } 2.1]$
Let $I$ be a countably infinite index set and $X$ be a Banach sequence space over $I$, in which $(e_i)_{i\in I}$ is an OP-basis. Let $\varphi : I \rightarrow I$ be an injective map. $N\geq 2,$ for each $1 \leq l \leq N$, $T_{l} = T_{b^{(l)}, \varphi} : X \rightarrow X$ be a weighted pseudo-shift generated by $\varphi$ with weight sequence $b^{(l)} = (b_i^{(l)})_{i\in I}.$ Then for any integers $1\leq r_1 < r_2 < \cdots\cdots < r_{N},$ the following are equivalent:

\begin{enumerate}
\item $T_{1} ^{r_1}, T_{2} ^{r_2}, \ldots, T_{N} ^{r_N}$ are densely d-hypercyclic.

\item $(\alpha)$ The mapping $\varphi : I\rightarrow I$ has no periodic points.

     \quad  \; $(\beta)$ There exists an increasing sequence $(n_k)_{k\geq 1}$ of positive integers such that for every $i\in I$ we have:

    (H1) $\mbox{ If } 1 \leq l \leq N,$

   \begin{eqnarray*}
   \left\{\begin{array}{ll}
   \left( \prod\limits_{v = 0}^{r_l n_k -1} b_{\varphi^v(i)} ^{(l)}\right)^{-1} e_{\varphi^{r_l n_k}(i)} \rightarrow 0,\\
   \\
   \left( \prod\limits_{v = 1}^{r_l n_k} b_{\psi^v(i)} ^{(l)}\right) e_{\psi^{r_l n_k}(i)} \rightarrow 0
   \end{array}\right.
   \mbox{ in } X, \mbox{ as } k\rightarrow \infty.
   \end{eqnarray*}

(H2) If $1 \leq s < l \leq N,$
\begin{eqnarray*}
   \left\{\begin{array}{ll}
   \left( \prod\limits_{v = 0}^{r_s n_k -1} b_{\varphi^v(i)} ^{(s)}\right)^{-1}
   \left( \prod\limits_{v = 1}^{r_l n_k} b_{\psi^v (\varphi^{r_s n_k}(i))}^{(l)} \right) e_{\psi^{(r_l-r_s) n_k}(i)} \rightarrow 0,\\
   \\
    \left( \prod\limits_{v = 0}^{r_l n_k -1} b_{\varphi^v(i)} ^{(l)}\right)^{-1}
   \left( \prod\limits_{v = 1}^{r_s n_k} b_{\psi^v (\varphi^{r_l n_k}(i))} ^{(s)}\right) e_{\varphi^{(r_l-r_s) n_k}(i)} \rightarrow 0\\
   \end{array}\right.
   \mbox{ in } X, \mbox{ as } k\rightarrow \infty.
   \end{eqnarray*}

   \item  $T_1^{r_1 },  T_{2} ^{r_2}, \ldots,T_{N }^{r_N }$ satisfy the d-Hypercyclicity Criterion.
\end{enumerate}
\end{theorem}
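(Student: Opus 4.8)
The plan is to establish the cycle of implications $(2)\Rightarrow(3)\Rightarrow(1)\Rightarrow(2)$, which yields the full equivalence. The implication $(3)\Rightarrow(1)$ is the soft direction: the d\nobreakdash-Hypercyclicity Criterion is a sufficient condition for dense d\nobreakdash-hypercyclicity, so this follows at once from the general disjoint analogue of the Hypercyclicity Criterion, which I would simply invoke.

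For $(2)\Rightarrow(3)$ I would build the criterion explicitly. Take the dense subspaces $X_0=X_1=\cdots=X_N=\operatorname{span}\{e_i : i\in I\}$, dense because $(e_i)_{i\in I}$ is an OP-basis. For each $l$ define a right inverse $S_l$ on basis vectors by $S_l e_i=(b_i^{(l)})^{-1}e_{\varphi(i)}$, so that iterating gives
\[
S_l^{\,r_l n_k}e_i=\Bigl(\prod_{v=0}^{r_l n_k-1}b_{\varphi^v(i)}^{(l)}\Bigr)^{-1}e_{\varphi^{r_l n_k}(i)},\qquad
T_l^{\,r_l n_k}e_i=\Bigl(\prod_{v=1}^{r_l n_k}b_{\psi^v(i)}^{(l)}\Bigr)e_{\psi^{r_l n_k}(i)}.
\]
Since $\psi$ is the inverse of $\varphi$ on its range, one checks $T_l^{\,r_l n_k}S_l^{\,r_l n_k}=\mathrm{Id}$ on each $e_i$, which supplies the diagonal requirement. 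The two relations in $(H1)$ say precisely that $T_l^{\,r_l n_k}e_i\to 0$ and $S_l^{\,r_l n_k}e_i\to 0$; the two relations in $(H2)$, after the cancellations $\psi^{\,r_s n_k}\varphi^{\,r_l n_k}=\varphi^{(r_l-r_s)n_k}$ and $\psi^{\,r_l n_k}\varphi^{\,r_s n_k}=\psi^{(r_l-r_s)n_k}$ (valid since $r_s<r_l$), say precisely that the off-diagonal compositions $T_s^{\,r_s n_k}S_l^{\,r_l n_k}e_i\to 0$ and $T_l^{\,r_l n_k}S_s^{\,r_s n_k}e_i\to 0$. Thus every clause of the d\nobreakdash-Hypercyclicity Criterion holds along $(n_k)$, with condition $(\alpha)$ guaranteeing that all the iterates $\varphi^v(i)$, $\psi^v(i)$ above are well defined.

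For the necessity $(1)\Rightarrow(2)$ I would argue as follows. To obtain $(\alpha)$, suppose $\varphi$ had a periodic point; then a finite $\varphi$-cycle confines a block of coordinates to a set that cannot be dense, contradicting the density of the orbit in $X^N$; I would adapt Grosse-Erdmann's argument for single pseudo-shifts. To obtain $(\beta)$, I would exploit the density of the d\nobreakdash-hypercyclic vectors together with the continuity of the coordinate functionals supplied by the OP-basis: for a fixed $i$, choosing the prescribed targets appropriately, that is, placing a single basis vector in one orbit slot and zeros in the slot of $x$ and in the remaining slots, and testing against the coordinate functionals, forces the corresponding weight products in $(H1)$ and $(H2)$ to be small for that $n$. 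Placing the nonzero target in the four possible slot configurations separates the four convergence statements.

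The main obstacle is the final uniformization in $(1)\Rightarrow(2)$: passing from ``for each $i$ and each clause there is a good $n$'' to a single increasing sequence $(n_k)$ serving all $i\in I$ and all clauses at once. Here I would use that $I$ is countable, run a diagonal argument over an exhaustion of $I$ by finite sets, and lean on $(\alpha)$ to guarantee that the forward orbits $\{\varphi^{v}(i)\}$ and backward orbits $\{\psi^{v}(i)\}$ eventually leave any fixed finite set, so that the supports of the approximating pieces for distinct targets and distinct indices become disjoint for large $n$ and the separate approximations combine without interference. Verifying $(\alpha)$ and managing this disjointness of supports is where the real work lies; the remaining computations reduce to the bookkeeping of weight products already displayed above.
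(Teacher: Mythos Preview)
Your cycle $(2)\Rightarrow(3)\Rightarrow(1)$ and the construction of the right inverses $S_l$ match the paper's argument (see the proof of the more general Theorem~\ref{main 1}, which specializes to this statement when all $\varphi_l=\varphi$). One small correction: condition $(\alpha)$ is not what makes the iterates $\psi^v(i)$ well defined---the standing convention already sets $e_{\psi^v(i)}=0$ whenever $\psi^v(i)$ is undefined, so $(\alpha)$ plays no role in $(2)\Rightarrow(3)$; it enters only on the $(1)\Rightarrow(2)$ side, via the run-away property.

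The real divergence, and the real gap, is in your plan for $(1)\Rightarrow(2)(\beta)$. You propose to isolate each clause by a target with $x$ close to $0$, a single $e_i$ in one orbit slot, and zeros elsewhere. This does recover the first line of $(H1)$, but it cannot recover the second: from $T_l^{\,r_l n}x$ close to $e_i$ one reads off, at coordinate $\psi^{r_l n}(i)$, that $\bigl(\prod_{v=1}^{r_l n}b^{(l)}_{\psi^v(i)}\bigr)x_i\, e_{\psi^{r_l n}(i)}$ is small; but since you have forced $x_i\approx 0$ this says nothing about $\bigl(\prod b^{(l)}\bigr)e_{\psi^{r_l n}(i)}$. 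The backward clauses require $x_i$ bounded \emph{away} from $0$, so your ``four configurations'' are mutually incompatible at the $x$-slot and cannot be superimposed by the disjoint-support trick you allude to. Even if you repair each clause separately (e.g.\ $x\approx e_i$ for the backward ones), you are left with different $n$'s for different clauses at the same stage, and a diagonal argument over $I$ does not merge them into a single $(n_k)$.

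The paper resolves this with a single target for everything: choose a d-hypercyclic vector $x$ with $\bigl\|x-\sum_{i\in I_k}e_i\bigr\|<\delta_k$, and then one $n_k$ with $\bigl\|T_l^{\,r_l n_k}x-\sum_{i\in I_k}e_i\bigr\|<\delta_k$ for every $l$ simultaneously. Now $x_i\approx 1$ for $i\in I_k$ while $x_j e_j$ is small for $j\notin I_k$; combining this with the run-away property (which pushes $\varphi^{r_l n_k}(I_k)$, $\psi^{r_l n_k}(I_k)$ and the mixed images outside $I_k$) and the equicontinuity of the coordinate projections yields all four clauses of $(H1)$--$(H2)$ for every $i\in I_k$ from this \emph{one} approximation. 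Choosing $n_1<n_2<\cdots$ inductively over the exhaustion $I_1\subset I_2\subset\cdots$ then produces the required sequence.
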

\begin{remark}
In paper \cite{WZ}, Theorem \ref{d-hyper1} holds for weighted pseudo-shifts on an F-sequence space.
\end{remark}
Note that, the weighted pseudo-shifts $T_{b^{(1)}, \varphi}, T_{b^{(1)}, \varphi},\ldots, T_{b^{(N)}, \varphi}$ in the above theorem are determined by different weights $b^{(l)}\;(1 \leq l \leq N)$ and the same injective mapping $\varphi$. In this article, we continue our research by considering the disjoint hypercyclicity and disjoint supercyclicity of finite weighted pseudo-shifts that are generated by different weights and different injective mappings.

   The following criteria play an important role in our main results. The first criterion is due to B\`{e}s and Peris \cite{BP} and the second one is due to Martin \cite{MO}.
\begin{definition} \label{d-hypercriterion}
Let $(n_k)_k$ be a strictly increasing sequence of positive integers. We say that $T_{1}, T_{2}, \ldots, T_{N} \in L(X)$ satisfy the d-Hypercyclicity Criterion with respect to $(n_k)_k$ provided there exist dense subsets $X_0, X_1, \ldots, X_N$ of $X$ and mappings $S_{l,k} : X_l \rightarrow X (1 \leq l \leq N, k\in \mathbb{N})$ satisfying
\begin{eqnarray*}
T_l^{n_k} \;\xrightarrow[k\rightarrow \infty]{}\;\; 0 \;\;\; \mbox{pointwise on } X_0,
\end{eqnarray*}
\begin{eqnarray*}
S_{l,k}\;\;\;\xrightarrow[k\rightarrow \infty]{} 0 \;\;\; \mbox{pointwise on } X_l, \mbox{ and }
\end{eqnarray*}
\begin{eqnarray}\label{1.1}
(T_l^{n_k}S_{i,k} - \delta _{i,l} Id_{X_i})\;\;\;\xrightarrow[k\rightarrow \infty]{} 0 \;\;\; \mbox{pointwise on } X_i\; (1\leq i\leq N).
\end{eqnarray}
In general,we say that $T_{1}, T_{2}, \ldots, T_{N} $ satisfy the d-Hypercyclicity Criterion if there exists some sequence $(n_k)_k$ for which \eqref{1.1} is satisfied.
\end{definition}

If $T_{1}, T_{2}, \ldots, T_{N} $ satisfy the d-Hypercyclicity Criterion with respect to a sequence $(n_k)_k$. Then $T_{1}, T_{2}, \ldots, T_{N} $ are densely d-hypercyclic.

\begin{definition} \label{d-supercriterion}
Let $X$ be a Banach space, $(n_k)_k$ be a strictly increasing sequence of positive integers and $N \geq 2$. We say that $T_{1}, T_{2}, \ldots, T_{N} \in L(X)$ satisfy the d-Supercyclicity Criterion with respect to $(n_k)_k$ provided there exist dense subsets $X_0, X_1, \ldots, X_N$ of $X$ and mappings $S_{l,k} : X_l \rightarrow X (1 \leq l \leq N, k\in \mathbb{N})$ such that for $1\leq l\leq N,$
\begin{description}
\item[(i)]$(T_l^{n_k}S_{i,k} - \delta _{i,l} Id_{X_i})\;\;\;\xrightarrow[k\rightarrow \infty]{} 0 \;\;\; \mbox{pointwise on } X_i\; (1\leq i\leq N);$
\item[(ii)]$\lim\limits_{k\rightarrow \infty}\left\|T_l^{n_k}x\right\| \cdot \left\|\sum\limits_{j=1}^{N} S_{j,k} y_j \right\|=0 \;\;\; \mbox{for } x\in X_0, y_j\in  X_j.$
\end{description}
\end{definition}

Let $N \geq 2$ and $T_{1}, T_{2}, \ldots, T_{N}\in L(X)$ satisfy the d-Supercyclicity Criterion. Then $T_{1}, T_{2}, \ldots, T_{N} $ have a dense set of d-supercyclic vectors.

To proceed further we recall some terminology about the sequence spaces and the weighted pseudo-shifts. For a comprehensive survey we recommend Grosse-Erdmann's paper \cite{GE}.

\begin{definition}
$\textbf{(Sequence\;Space)}$\;\; If we allow an arbitrary countably infinite set $I$ as an index set, then a \emph{sequence space over} $I$ is a subspace of the space $\omega(I)=\mathbb{C}^{I}$ of all scalar families $(x_i)_{i\in I}.$ The space $\omega(I)$ is endowed with its natural product topology.

A \emph{topological sequence space $X$ over $I$} is a sequence space over $I$ that is endowed with a linear topology in such a way that the inclusion mapping $X\hookrightarrow \omega(I)$ is continuous or, equivalently, that every \emph{coordinate functional} $f_i: X\rightarrow \mathbb{C}, (x_k)_{k\in I}\mapsto x_i(i \in I)$ is continuous. A \emph{Banach $($Hilbert, F-$)$ sequence space over $I$} is a topological sequence space over $I$ that is a Banach $(\mbox{Hilbert, F-})$ space.
\end{definition}

\begin{definition}
$\textbf{(OP-basis)}$\;\;
 By $(e_i)_{i\in I}$ we denote the canonical unit vectors $e_i=(\delta_{ik})_{k\in I}$ in a topological sequence space $X$ over $I.$ We say $(e_i)_{i\in I}$ is an $OP-basis$ or ($Ovsepian\; Pe{\l}czy\acute{n}ski \; basis$) if $\mbox{span}\{e_i: i\in I\}$ is a dense subspace of $X$ and
  the family of  $coordinate\;  projections$ $x\mapsto x_i e_i(i \in I)$ on $X$ is equicontinuous.

  Note that in a Banach sequence space over $I,$ the family of coordinate projections is equicontinuous if and only if $\sup_{i\in I}||e_i|| ||f_i||< \infty.$
\end{definition}

\begin{definition}
$\textbf{(Pseudo-shift Operator)}$\;\;
 Let $X$ be a Banach sequence
  space over $I$. Then a continuous linear operator $T : X \rightarrow X$ is called a \textit{ weighted pseudo-shift} if there is a sequence $(b_i)_{i\in I}$ of non-zero scalars and an injective mapping $\varphi : I \rightarrow I$ such that
 $$T(x_i)_{i\in I}=(b_i x_{\varphi(i)})_{i\in I}$$
 for $(x_i) \in X.$ We then write $T = T_{b,\varphi},$ and $(b_i)_{i\in I}$ is called the \textit{ weight sequence}.
\end{definition}

\begin{remark}
\begin{enumerate}
\item If $T = T_{b,\varphi} : X \rightarrow X$ is a weighted pseudo-shift, then
each $T^n (n\geq 1)$ is also a weighted pseudo-shift as follows
\[
T^{n}(x_i)_{i\in I} = (b_{n,i}x_{\varphi^n(i)})_{i\in I}
\]
where
 $$\varphi^n(i) =(\varphi \circ \varphi \circ \cdots \circ \varphi)(i)\;\;\; (\textit{n}- \mbox{fold})$$
\begin{eqnarray*}
b_{n,i}&=&b_ib_{\varphi(i)} \cdots b_{\varphi^{n-1}(i)} = \prod\limits_{v=0}^{n-1}b_{\varphi^{v}(i)}.
\end{eqnarray*}

\item We consider the inverse $\psi = \varphi^{-1} :\varphi(I) \rightarrow I$ of the mapping $\varphi.$ We also set
\[
b_{\psi(i)} = 0 \;\;\; \mbox{ and }\;\;\; e_{\psi(i)} = 0 \;\;\;  \mbox{if } i\in I\setminus \varphi(I),
\]
i.e. when $\psi(i)$ is `` undefined ". Then
for all $i \in I,$
$$T_{b,\varphi} e_i = b_{\psi(i)} e_{\psi(i)}.$$

\item We denote $\psi^n = \psi \circ \psi \circ \cdots \circ \psi$ (\textit{n}-fold), and we set $b_{\psi^{n}(i)} = 0 \mbox{ and } e_{\psi^{n}(i)} = 0$ when $\psi^{n}(i)$ is `` undefined ".
\end{enumerate}
\end{remark}

\begin{definition}
Let $\varphi: I \rightarrow I$ be a map on $I$ and let $(\varphi^n)_n$ be the sequence of iterates of the mapping $\varphi$ (that is, $\varphi^n : = \varphi \circ \varphi \circ \cdots \circ \varphi$ ($n$-fold)). We call
$(\varphi^n)_n$ is a \textit{run-away sequence} if for each pair of finite subsets $I_0 \subset I,$ there exists an $n_0 \in \mathbb{N}$ such that $\varphi^n(I_0)\cap I_0 = \emptyset$ for every $n \geq n_0.$

 Let $\varphi_1: I \rightarrow I$ and $\varphi_2 :I \rightarrow I$ be two maps on $I.$ We call the sequence $(\varphi_1^n)_n$ is run-away with $(\varphi_2^n)_n$ if for any finite subset $I_0 \subset I,$ there exists an $n_0 \in \mathbb{N}$ such that $\varphi_1^n(I_0)\cap \varphi_2^n(I_0) = \emptyset$ for every $n \geq n_0.$
\end{definition}

\section{Disjoint hypercyclic powers of weighted pseudo-shifts}

In this section, let $X$ be a Banach sequence space over $I.$ We will characterize disjoint hypercyclic weighted pseudo-shifts on $X$ generated by different weights and different injective maps. Which is a generalization of Theorem \ref{d-hyper1}.

The following is the main result in this section.
\begin{theorem}\label{main 1}
Let $X$ be a Banach sequence space over $I$, in which $(e_i)_{i\in I}$ is an OP-basis. Let integers $1\leq r_1 < r_2 < \cdots < r_{N}$ be given. $N\geq 2,$ for each $1 \leq l \leq N$, let $T_{l} = T_{b^{(l)}, \varphi_{l}} : X \rightarrow X$ be a weighted pseudo-shift with the weight sequence $b^{(l)} = (b_i^{(l)})_{i\in I}$ and the injective mapping $\varphi_{l}.$ If for any $1 \leq s < l \leq N,$ the sequence $((\varphi_{s}^{r_s})^n)_n$ is run-away with $((\varphi_{l}^{r_l})^n)_n$. Then the following are equivalent:

\begin{enumerate}
\item $T_{1} ^{r_1}, T_{2} ^{r_2}, \ldots, T_{N} ^{r_N}$ are densely d-hypercyclic.

\item\quad $(\alpha)$ For each $1 \leq l \leq N$, the mapping $\varphi_{l} : I\rightarrow I$ has no periodic points.

  \quad\;\;\;\;\;$(\beta)$ There exists an increasing sequence $(n_k)_{k\geq 1}$ of positive integers such that for every $i\in I$ we have:

    (H1) $\mbox{ If } 1 \leq l \leq N,$

   \begin{eqnarray*}
   \left\{\begin{array}{ll}
   \lim\limits_{k\rightarrow \infty}\left\|\left( \prod\limits_{v = 0}^{r_l n_k -1} b_{\varphi^v_{l}(i)} ^{(l)}\right)^{-1} e_{\varphi^{r_l n_k}_{l}(i)}\right\| = 0,\\
   \\
   \lim\limits_{k\rightarrow \infty}\left\|\left( \prod\limits_{v = 1}^{r_l n_k} b_{\psi^v_{l}(i)} ^{(l)}\right) e_{\psi^{r_l n_k}_{l}(i)}\right\| = 0
   \end{array}\right.
   \end{eqnarray*}

(H2) If $1 \leq s < l \leq N,$
\begin{eqnarray*}
   \left\{\begin{array}{ll}
   \lim\limits_{k\rightarrow \infty}\left\|\left( \prod\limits_{v = 0}^{r_s n_k -1} b_{\varphi^v_{s}(i)} ^{(s)}\right)^{-1}
   \left( \prod\limits_{v = 1}^{r_l n_k} b_{\psi^v_{l} (\varphi^{r_s n_k}_{s}(i))}^{(l)} \right) e_{\psi_{l}^{r_ln_k}(\varphi_{s}^{r_s n_k}(i))} \right\| = 0,\\
   \\
   \lim\limits_{k\rightarrow \infty}\left\| \left( \prod\limits_{v = 0}^{r_l n_k -1} b_{\varphi^v_{l}(i)} ^{(l)}\right)^{-1}
   \left( \prod\limits_{v = 1}^{r_s n_k} b_{\psi^v_{s} (\varphi^{r_l n_k}_{l}(i))} ^{(s)}\right) e_{\psi_{s}^{r_sn_k}(\varphi_{l}^{r_l n_k}(i))} \right\| = 0
   \end{array}\right.
   \end{eqnarray*}

   \item  $T_1^{r_1 },  T_{2} ^{r_2}, \ldots,T_{N }^{r_N }$ satisfy the d-Hypercyclicity Criterion.
   \end{enumerate}
\end{theorem}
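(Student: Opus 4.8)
The plan is to prove the cycle $(1)\Rightarrow(2)\Rightarrow(3)\Rightarrow(1)$. The implication $(3)\Rightarrow(1)$ is free: the remark following Definition \ref{d-hypercriterion} records that operators satisfying the d-Hypercyclicity Criterion are densely d-hypercyclic.

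For $(2)\Rightarrow(3)$ I would take the common dense subspace $X_0=X_1=\cdots=X_N=\operatorname{span}\{e_i:i\in I\}$ (dense because $(e_i)_{i\in I}$ is an OP-basis) and define, for each $l$ and $k$, the linear map $S_{l,k}$ on this span by $S_{l,k}e_i=\big(\prod_{v=0}^{r_ln_k-1}b^{(l)}_{\varphi_l^v(i)}\big)^{-1}e_{\varphi_l^{r_ln_k}(i)}$. Using the identity $\psi_l^{v}(\varphi_l^{r_ln_k}(i))=\varphi_l^{r_ln_k-v}(i)$ one checks that $T_l^{r_ln_k}S_{l,k}=\mathrm{Id}$ holds exactly on the span, so the diagonal terms of \eqref{1.1} vanish identically. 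The requirement $T_l^{r_ln_k}\to0$ pointwise on $X_0$ is exactly the second line of (H1) read on each $e_i$, the requirement $S_{l,k}\to0$ pointwise on $X_l$ is the first line of (H1), and the off-diagonal terms $T_l^{r_ln_k}S_{s,k}e_i$ and $T_s^{r_sn_k}S_{l,k}e_i$ (for $s<l$) reproduce verbatim the first and second lines of (H2). Thus the d-Hypercyclicity Criterion holds along $(n_k)$.

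The heart of the theorem is $(1)\Rightarrow(2)$. Projecting the dense d-orbit onto each coordinate shows that every $T_l^{r_l}$ is hypercyclic; in particular it has dense range, and $(\alpha)$ follows, for if $\varphi_l$ had a periodic orbit $O$ of finite length $p$ then the continuous surjection $R:X\to\mathbb{C}^{O}$, $x\mapsto(x_j)_{j\in O}$, would intertwine $T_l^{r_l}$ with a weighted cyclic operator on the finite-dimensional space $\mathbb{C}^{O}$, producing a hypercyclic operator in finite dimensions, which is impossible. For $(\beta)$ the key preliminary is that dense range of the $T_l^{r_l}$ makes every tail $\{(T_l^{r_ln}z)_l:n\ge k\}$ of a dense d-orbit still dense; hence for any finite $F\subset I$, any $\varepsilon>0$ and any $M$ there are $n>M$ and $x\in X$ with $\|x-u_F\|<\varepsilon$ and $\|T_l^{r_ln}x-u_F\|<\varepsilon$ for all $l$, where $u_F:=\sum_{i\in F}e_i$. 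Using the common target $u_F$ is what allows every quantity in (H1)--(H2) to be read off from this one pair $(x,n)$: applying the uniformly bounded coordinate projections $y\mapsto y_je_j$ to $T_l^{r_ln}x$ at the indices $j=i,\ \psi_l^{r_ln}(i),\ \varphi_l^{r_ln}(i)$ yields (H1), while for (H2) one applies them at $j=\psi_l^{r_ln}(\varphi_s^{r_sn}(i))$ and uses $(T_s^{r_sn}x)_i\approx1$ to substitute $\big(\prod_{v=0}^{r_sn-1}b^{(s)}_{\varphi_s^v(i)}\big)^{-1}\approx x_{\varphi_s^{r_sn}(i)}$, turning the target expression into the coordinate $(T_l^{r_ln}x)_j$, of size $O(\varepsilon)$.

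The main obstacle is ensuring that these extracted coordinates sit at the correct entry of $u_F$, and this is precisely where the hypotheses are spent. Because $\varphi_l$ has no periodic points, the forward and backward iterates $\varphi_l^{r_ln}(i)$, $\psi_l^{r_ln}(i)$ of a point of the finite set $F$ leave $F$ once $n$ is large, so the matching entry of $u_F$ is $0$ and the (H1) quantities are driven to $0$. For the coupled (H2) quantities the decisive index is $J=\psi_l^{r_ln}(\varphi_s^{r_sn}(i))$, which satisfies $\varphi_l^{r_ln}(J)=\varphi_s^{r_sn}(i)$; were $J\in F$ we would get $\varphi_s^{r_sn}(i)\in\varphi_s^{r_sn}(F)\cap\varphi_l^{r_ln}(F)$, which the run-away hypothesis forbids for all large $n$. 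Thus run-away is exactly the device that forces $J\notin F$, i.e. that the relevant entry of $u_F$ vanishes, so the cross terms decay. Finally I would fix an exhaustion $F_1\subset F_2\subset\cdots$ with $\bigcup_kF_k=I$ and inductively choose $n_k>n_{k-1}$ exceeding all the ``leaves $F_k$'' and run-away thresholds so that, with $\varepsilon=1/k$, every quantity in (H1)--(H2) for $i\in F_k$ is $<1/k$; the resulting single sequence $(n_k)$ witnesses $(\beta)$ for every $i\in I$.
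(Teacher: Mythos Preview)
Your proposal is correct and follows essentially the same route as the paper: the cycle $(1)\Rightarrow(2)\Rightarrow(3)\Rightarrow(1)$, the right inverses $S_{l,k}$ on $\operatorname{span}\{e_i\}$ for $(2)\Rightarrow(3)$, and for $(1)\Rightarrow(2)$ the choice of a d-hypercyclic vector $x$ close to $u_F$ with $T_l^{r_ln}x$ also close to $u_F$, followed by coordinate projections at indices forced outside $F$ by the run-away hypotheses. One small slip: the first line of (H1) does not come from projecting $T_l^{r_ln}x$ at $j=\varphi_l^{r_ln}(i)$ (that picks up the wrong weight product); as in the paper, you should project $x$ itself at $\varphi_l^{r_ln}(i)\notin F$ to get $\|x_{\varphi_l^{r_ln}(i)}e_{\varphi_l^{r_ln}(i)}\|$ small, and then combine this with the substitution $\bigl(\prod_{v=0}^{r_ln-1}b^{(l)}_{\varphi_l^v(i)}\bigr)^{-1}\approx x_{\varphi_l^{r_ln}(i)}$ coming from $(T_l^{r_ln}x)_i\approx 1$.
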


\begin{proof}

$(1)\Rightarrow (2).$
 Since $T_{1} ^{r_1}, T_{2} ^{r_2}, \ldots, T_{N} ^{r_N}$ are d-hypercyclic, $T_{l}$ is hypercyclic for each $1 \leq l \leq N.$ In \cite{GE}, Grosse-Erdmann proved that if the weighted pseudo-shift $T_{b^{(l)}, \varphi_{l}}$ is hypercyclic, then the mapping $\varphi_{l} : I\rightarrow I$ has no periodic points, which gives that $(\varphi_{l}^n)_n$ is a run-away sequence.

 By assumption $I$ is a countably infinite set, we fix $$I : =\{i_1, i_2, \cdots, i_n,\cdots \}$$ and set $I_k: = \{i_1, i_2, \cdots, i_k\}$ for each integer $k$ with $k\geq 1.$
To complete the proof of $(\beta)$, we first show that for any integers $k, N_0$ with $k\geq 1$ and $N_0 \in \mathbb{\mathbb{N}},$ there is an integer $n_k > N_0$ such that for every $i\in I_k,$

  $\mbox{ if } 1 \leq l \leq N,$

   \begin{eqnarray} \label{a1}
   \left\{\begin{array}{ll}
 (i)\; \left\| \left( \prod\limits_{v = 0}^{r_l n_k -1} b_{\varphi^v_l(i)} ^{(l)}\right)^{-1} e_{\varphi^{r_l n_k}_l(i)}\right\| < \frac{1}{k},\\
 \\
   (ii)\; \;\; \left\|\left( \prod\limits_{v = 1}^{r_l n_k} b_{\psi^v_l(i)} ^{(l)}\right) e_{\psi^{r_l n_k}_l(i)}\right\| < \frac{1}{k},
   \end{array}\right.
   \end{eqnarray}

 if $1 \leq s < l \leq N,$
\begin{eqnarray}\label{a2}
   \left\{\begin{array}{ll}
   (i)\;\left\|\left( \prod\limits_{v = 0}^{r_s n_k -1} b_{\varphi^v_s (i)} ^{(s)}\right)^{-1}
   \left( \prod\limits_{v = 1}^{r_l n_k} b_{\psi^v_l (\varphi^{r_s n_k}_s(i))} ^{(l)}\right) e_{\psi_{l}^{r_ln_k}(\varphi_{s}^{r_s n_k}(i))}\right\| < \frac{1}{k}, \\
   \\
  (ii)\; \; \left\| \left( \prod\limits_{v = 0}^{r_l n_k -1} b_{\varphi^v_{l}(i)} ^{(l)}\right)^{-1}
   \left( \prod\limits_{v = 1}^{r_s n_k} b_{\psi^v_{s} (\varphi^{r_l n_k}_{l}(i))} ^{(s)}\right) e_{\psi_{s}^{r_sn_k}(\varphi_{l}^{r_l n_k}(i))}\right\| < \frac{1}{k}. \\
   \end{array}\right.
   \end{eqnarray}
Let integers $k\geq 1$ and $N_0 \in \mathbb{N}$ be given. Notice that $(e_i)_{i\in I}$ is an OP-basis, by the equicontinuity of the coordinate projections in $X,$ there is some $\delta_k > 0$ so that for $x = (x_i)_{i\in I} \in X,$
   \begin{eqnarray}\label{a3}
   ||x_i e_i|| < \frac{1}{2k} \;\; \mbox{ for all } i\in I, \mbox{ if } ||x|| < \delta_k.
    \end{eqnarray}
Since for each $1\leq l \leq N$ the sequence $(\varphi_{l}^n)_n$ is run-away and for $1\leq s < l \leq N,$  $((\varphi_{s}^{r_s})^n)_n$ is run-away with $((\varphi_{l}^{r_l})^n)_n$, there exists an integer $\widetilde{N_0}\in \mathbb{N}$ such that
\begin{eqnarray}\label{a4}
 \left\{\begin{array}{ll}
(i)\;\varphi^{r_ln}_l(I_k)\cap I_k = \emptyset \; (1\leq l \leq N),\\
\\
(ii)\;\psi_{l}^{r_ln}(\varphi_{s}^{r_s n}(I_k)\cap  \varphi^{r_ln}_l(I))\cap I_k = \emptyset \; (1\leq s < l \leq N)
\end{array}\right.
\end{eqnarray}
for all $n\geq \widetilde{N_0}.$

   By the density of d-hypercyclic vectors in $X$ and the continuous inclusion of $X$ into $\mathbb{K}^I,$  we can find a d-hypercyclic vector $x = (x_i)_{i\in I}\in X$ and an integer $n_k > \max\{N_0, \widetilde{N_0}\}$ such that
     \begin{eqnarray}\label{a5}
      \left\{\begin{array}{ll}
     (i) \; \|x-\sum_{i\in I_k}e_i\| < \delta_k,\\
     \;\\
     (ii) \; \sup\limits_{i\in I_k}|x_i-1|\leq \frac{1}{2}\\
     \end{array}\right.
   \end{eqnarray}
      and for each $1\leq l \leq N,$
      \begin{eqnarray}\label{a6}
      \left\{\begin{array}{ll}
     (i) \; \left\|y^{(l)} -\sum_{i\in I_k}e_i\right\| < \delta_k,\\
     \;\\
     (ii) \; \sup\limits_{i\in I_k}| y_i^{(l)}-1|\leq \frac{1}{2}, \\
     \end{array}\right.
     \end{eqnarray}
  where $y^{(l)} := T_l^{r_ln_k}x = (\prod\limits_{v=0}^{r_ln_k-1}b_{\varphi^v_l(i)}^{(l)}x_{\varphi^{r_ln_k}_l(i)})_{i\in I} = (y^{(l)}_i)_{i\in I}.$ \\
 By \eqref{a3}, the first inequality in \eqref{a5} implies that
\begin{eqnarray}\label{a7}
\|x_i e_i\| < \frac{1}{2k} \;\;\;\mbox{ if } i\notin I_k,
\end{eqnarray}
thus by the first inequality in \eqref{a4}, for each $1\leq l \leq N$ we have that
\begin{eqnarray}\label{a8}
 \left\|x_{\varphi^{r_ln_k}_l(i)}e_{\varphi^{r_ln_k}_l(i)}\right\| < \frac{1}{2k}\;\; \mbox{ for } i\in I_k.
 \end{eqnarray}
By the second inequality in \eqref{a6}, for each $i\in I_k$ and $1\leq l \leq N,$
\begin{eqnarray*}
\left|\prod\limits_{v=0}^{r_ln_k-1}b_{\varphi^v_l(i)}^{(l)}x_{\varphi^{r_ln_k}_l(i)}-1\right|\leq \frac{1}{2},
\end{eqnarray*}
which means that
\begin{eqnarray}\label{a9}
\left\{\begin{array}{ll}
(i)\;x_{\varphi^{r_ln_k}_l(i)} \neq 0, \\
\\
(ii)\;\left|\left(\prod\limits_{v=0}^{r_ln_k-1}b_{\varphi^v_l(i)}^{(l)}x_{\varphi^{r_ln_k}_l(i)}\right)^{-1}\right| \leq 2.\\
\end{array}\right.
\end{eqnarray}
For each $i\in I_k$ and $1\leq l \leq N,$ by \eqref{a8} and \eqref{a9}, it follows that
\begin{eqnarray*}
\left\| \left(\prod \limits_{v=0}^{r_ln_k-1} b_{\varphi^{v}_l(i)}^{(l)}\right)^{-1}e_{\varphi^{r_ln_k}_l(i)}\right\|& = & \left\|\frac{1}{\left(\prod \limits_{v=0}^{r_ln_k-1} b_{\varphi^{v}_l(i)}^{(l)}\right)x_{\varphi^{r_ln_k}_l(i)}} x_{\varphi^{r_ln_k}(i)}e_{\varphi^{r_ln_k}_l(i)}\right\|\\
& \leq &  2\left\|x_{\varphi^{r_ln_k}_l(i)}e_{\varphi^{r_ln_k}_l(i)}\right\| \\
& < & \frac{1}{k}.
\end{eqnarray*}

 We deduce from \eqref{a4} and the definition of $\psi$ that
 \begin{eqnarray}\label{a10}
 \left\{\begin{array}{ll}
\psi^{r_ln_k}_l(I_k\cap \varphi^{r_ln_k}_l(I))\cap I_k = \emptyset \;\; \mbox{ for }1\leq l \leq N,\\
\\
\\ \psi_{s}^{r_sn_k}(\varphi_{l}^{r_l n_k}(I_k)\cap  \varphi^{r_sn_k}_s(I))\cap I_k = \emptyset\;\; \mbox{ for }1\leq s <  l \leq N.
\end{array}\right.
\end{eqnarray}
By \eqref{a3}, the first inequality in \eqref{a6} implies
\begin{eqnarray}\label{a11}
\left\|\prod\limits_{v=0}^{r_ln_k-1}b_{\varphi^v_l(i)}^{(l)}x_{\varphi^{r_ln_k}_l(i)}e_i\right\| < \frac{1}{2k}\;\;\mbox{ for } i\notin I_k \mbox{ and } 1\leq l \leq N.
\end{eqnarray}
Note that for each $1\leq l \leq N,$
$$e_{\psi^{r_ln_k}_l(i)} = 0 \;\;\;\mbox{    if }\; i\in I_k\backslash \varphi^{r_ln_k}_l(I)$$
and for $1\leq s <  l \leq N,$
$$e_{\psi_{l}^{r_ln_k}(\varphi_{s}^{r_s n_k}(i))} = 0 \;\;\;\mbox{    if }\; i\in \varphi^{r_sn_k}_s(I_k)\backslash \varphi^{r_ln_k}_l(I),$$
$$e_{\psi_{s}^{r_sn_k}(\varphi_{l}^{r_l n_k}(i))} = 0 \;\;\;\mbox{    if }\; i\in \varphi^{r_ln_k}_l(I_k)\backslash \varphi^{r_sn_k}_s(I).$$
So by \eqref{a11}, \eqref{a10} and $(ii)$ of  \eqref{a4} we obtain that:

For each $i\in I_k$ and $1\leq l \leq N,$
\begin{eqnarray}\label{a12}
&\;&\left\|\prod\limits_{v=0}^{r_ln_k-1}b_{\varphi^v_l(\psi^{r_ln_k}_l(i))}^{(l)}x_{\varphi^{r_ln_k}_l(\psi^{r_ln_k}_l(i))}e_{\psi^{r_ln_k}_l(i)}\right\|\nonumber\\
&\;&=\left\|\prod\limits_{v=1}^{r_ln_k} b_{\psi^v_l(i)}^{(l)}x_ie_{\psi^{r_ln_k}_l(i)}\right\|< \frac{1}{2k}.
\end{eqnarray}

 For each $i\in I_k$ and $1\leq s < l \leq N,$
\begin{eqnarray}\label{a13}
&\;&\left\|\prod\limits_{v=0}^{r_ln_k-1}b_{\varphi^v_l(\psi_{l}^{r_ln_k}(\varphi_{s}^{r_s n_k}(i)))}^{(l)}x_{\varphi^{r_ln_k}_l(\psi_{l}^{r_ln_k}(\varphi_{s}^{r_s n_k}(i)))}e_{\psi_{l}^{r_ln_k}(\varphi_{s}^{r_s n_k}(i))}\right\|\nonumber\\
&\;&=\left\|\prod\limits_{v=1}^{r_ln_k}b_{\psi^v_l(\varphi_s^{r_sn}(i))}^{(l)}x_{\varphi^{r_sn_k}_s(i)}e_{\psi_{l}^{r_ln_k}(\varphi_{s}^{r_s n_k}(i))}\right\|
< \frac{1}{2k}
\end{eqnarray}
and
\begin{eqnarray}\label{a14}
&\;&\left\|\prod\limits_{v=0}^{r_sn_k-1}b_{\varphi^v_s(\psi_{s}^{r_sn_k}(\varphi_{l}^{r_l n_k}(i)))}^{(s)}x_{\varphi_s^{r_sn_k}(\psi_{s}^{r_sn_k}(\varphi_{l}^{r_l n_k}(i)))}e_{\psi_{s}^{r_sn_k}(\varphi_{l}^{r_l n_k}(i))}\right\|\nonumber\\
&\;&=\left\|\prod\limits_{v=1}^{r_sn_k}b_{\psi^v_s(\varphi_l^{r_ln_k}(i))}^{(s)}x_{\varphi^{r_ln_k}_l(i)}e_{\psi_{s}^{r_sn_k}(\varphi_{l}^{r_l n_k}(i))}\right\|
< \frac{1}{2k}.
\end{eqnarray}
By the second inequality in \eqref{a5},
\begin{eqnarray}\label{a15}
0 < \frac{1}{|x_i|} \leq 2 \;\; \mbox{ for } i\in I_k.
\end{eqnarray}
Now by \eqref{a12} and \eqref{a15} we get that for each $i\in I_k$ and $1\leq l \leq N,$
\begin{eqnarray*}
\left\| \left(\prod \limits_{v=1}^{r_ln_k} b_{\psi^{v}_l(i)}^{(l)}\right)e_{\psi^{r_ln_k}_l(i)}\right\|
&=&\left\|\frac{1}{x_i}  \left(\prod \limits_{v=1}^{r_ln_k} b_{\psi^{v}_l(i)}^{(l)}\right)x_{i}e_{\psi^{r_ln_k}_l(i)}\right\|\\
&\leq& 2 \left\| \left(\prod \limits_{v=1}^{r_ln_k} b_{\psi^{v}_l(i)}^{(l)}\right)x_{i}e_{\psi^{r_ln_k}_l(i)}\right\|\\
&<& \frac{1}{k}.
 \end{eqnarray*}
 Similarly, \eqref{a9}, \eqref{a13} and \eqref{a14} give that for each $i\in I_k$ and $1\leq s < l \leq N,$
 \begin{eqnarray*}
 &\;& \left\|\left( \prod\limits_{v = 0}^{r_s n_k -1} b_{\varphi^v_s (i)} ^{(s)}\right)^{-1}
   \left( \prod\limits_{v = 1}^{r_l n_k} b_{\psi^v_l (\varphi^{r_s n_k}_s(i))} ^{(l)}\right) e_{\psi_{l}^{r_ln_k}(\varphi_{s}^{r_s n_k}(i))}\right\|\nonumber\\
  &\;& =\left\|\left( \prod\limits_{v = 0}^{r_s n_k -1} b_{\varphi^v_s (i)} ^{(s)}x_{\varphi^{r_sn_k}_s(i)}\right)^{-1} \prod\limits_{v = 1}^{r_l n_k} b_{\psi^v_l (\varphi^{r_s n_k}_s(i))} ^{(l)}x_{\varphi^{r_sn_k}_s(i)} e_{\psi_{l}^{r_ln_k}(\varphi_{s}^{r_s n_k}(i))}\right\|\nonumber\\
    &\;&\leq 2 \left\|\prod\limits_{v = 1}^{r_l n_k} b_{\psi^v_l (\varphi^{r_s n_k}_s(i))} ^{(l)}x_{\varphi^{r_sn_k}_s(i)} e_{\psi_{l}^{r_ln_k}(\varphi_{s}^{r_s n_k}(i))}\right\|< \frac{1}{k}
  \end{eqnarray*}
  and
  \begin{eqnarray*}
   &\;&\left\| \left( \prod\limits_{v = 0}^{r_l n_k -1} b_{\varphi^v_l (i)} ^{(l)}\right)^{-1}
   \left( \prod\limits_{v = 1}^{r_s n_k} b_{\psi^v_s (\varphi^{r_l n_k}_l(i))} ^{(s)}\right) e_{\psi_{s}^{r_sn_k}(\varphi_{l}^{r_l n_k}(i))}\right\|\\
    &\;& = \left\|\left( \prod\limits_{v = 0}^{r_l n_k -1} b_{\varphi^v_l (i)} ^{(l)}x_{\varphi^{r_ln_k}_l(i)}\right)^{-1}\left(\prod\limits_{v = 1}^{r_s n_k} b_{\psi^v_s (\varphi^{r_l n_k}_l(i))} ^{(s)}\right) x_{\varphi^{r_ln_k}_l(i)} e_{\psi_{s}^{r_sn_k}(\varphi_{l}^{r_l n_k}(i))}\right\|\\
     &\;&<\frac{1}{k}.
    \end{eqnarray*}

     Taking $k=1, 2, 3, \ldots$ in the above argument, we can define inductively an increasing sequence $(n_k)_{k\geq 1}$ of positive integers by letting $n_k$ be a positive integer satisfying \eqref{a1} and \eqref{a2} for $N_0 = n_{k-1}$ (where we set $N_0 = 0$ when $k = 1$). Since for any fixed $i\in I,$ there exists an integer $m_0 \in \mathbb{N}$ such that $i\in I_k$ for all $k \geq m_0,$ the sequence
    $(n_k)_{k\geq 1}$ satisfies \textit{(H1)} and \textit{(H2)}.

  $(2)\Rightarrow (3).$ Suppose (2) holds and let $(n_k)_{k \geq 1}$ be an increasing sequence of positive integers satisfying \textit{(H1)} and \textit{(H2)}.
  Set $X_0 = X_1 = \cdots = X_N = span \{ e_i : i\in I\}$ which are dense in $X.$ For each $1 \leq l \leq N$ and integer $n\geq 1,$ we consider the linear mapping $S_{l,n} : X_l \rightarrow X$ given by
$$ S_{l, n}(e_i) = \left(\prod \limits_{v=0}^{r_ln_k-1} b_{\varphi^{v}_l(i)}^{(l)}\right)^{-1}e_{\varphi^{r_ln}_l(i)}\;\;\;\;\;(i\in I).$$
Since
 $T_l^{r_ln} e_i = \left(\prod \limits_{v=1}^{r_ln} b_{\psi^{v}_l(i)}^{(l)}\right)e_{\psi^{r_ln}_l(i)}\;(n\geq 1),$
 we have $T_l^{r_ln} S_{l,n} (e_i) = e_i$ for $i\in I$ and $n\geq 1.$ \\
  By \textit{(H1)}, for any $i \in I$ and $1 \leq l \leq N,$
 $$\lim \limits_{k\rightarrow \infty}S_{l, n_k}(e_i) = \lim \limits_{k\rightarrow \infty}\left(\prod \limits_{v=0}^{r_ln_k-1} b_{\varphi^{v}_l(i)}^{(l)}\right)^{-1}e_{\varphi^{r_ln_k}_l(i)} = 0$$
 and
 $$\lim \limits_{k\rightarrow \infty}T_l^{r_ln_k} e_i = \lim \limits_{k\rightarrow \infty}\left(\prod \limits_{v=1}^{r_ln_k} b_{\psi^{v}_l(i)}^{(l)}\right)e_{\psi^{r_ln_k}_l(i)} = 0.$$
 An easy calculation gives that for any $i \in I$ and $1 \leq s < l \leq N,$
  \begin{eqnarray*}
   \left\{\begin{array}{ll}
   T_l^{r_ln_k}S_{s, n_k}(e_i)=\left( \prod\limits_{v = 0}^{r_s n_k -1} b_{\varphi^v_s(i)} ^{(s)}\right)^{-1}
   \left( \prod\limits_{v = 1}^{r_l n_k} b_{\psi^v_l (\varphi^{r_s n_k}_s(i))}^{(l)} \right) e_{\psi_{l}^{r_ln_k}(\varphi_{s}^{r_s n_k}(i))},\\
   T_s^{r_sn_k}S_{l, n_k}(e_i)= \left( \prod\limits_{v = 0}^{r_l n_k -1} b_{\varphi^v_l(i)} ^{(l)}\right)^{-1}
   \left( \prod\limits_{v = 1}^{r_s n_k} b_{\psi^v_s (\varphi^{r_l n_k}_l(i))} ^{(s)}\right) e_{\psi_{s}^{r_sn_k}(\varphi_{l}^{r_l n_k}(i))},\\
   \end{array}\right.
   \end{eqnarray*}
   therefore by \textit{(H2)}, $T_l^{r_ln_k}S_{s, n_k}(e_i)$ and $T_s^{r_sn_k}S_{l, n_k}(e_i)$ both tend to $0$ for any $i\in I.$ Then we conclude by using linearity.

$(3)\Rightarrow (1).$ This implication is obvious.
\end{proof}
\begin{remark}
For each $1 \leq l \leq N$, the mapping $\varphi_{l} : I\rightarrow I$ has no periodic points does not implies $(\varphi_{s}^{r_sn})_n$ is run-away with $(\varphi_{l}^{r_ln})_n$ for $1 \leq s < l \leq N.$ For example, if we let $I = \mathbb{Z},$ define $\varphi_1(i) = i+2$ and $\varphi_2(i) = i+1$ for $i\in \mathbb{Z}.$ Let $r_1 = 1, r_2 = 2.$ Clearly, for $l =1,2,$ $\varphi_{l}$ has no periodic points. But for any positive integer $n\geq 1,$ $\varphi_1^{r_1 n} = \varphi_2^{r_2 n}.$
\end{remark}

Now we illustrate Theorem \ref{main 1} with the following example. The definitions about shifts on weighted $L^p$ spaces of directed trees are borrowed from Mart\'{\i}nez-Avenda\~{n}o \cite{MRA}.

\begin{example}
Let $T = (V,E)$ be a infinite unrooted directed tree such that $T$ have no vertices of outdegree larger than $1.$ Let $1\leq p < \infty,$ and $\lambda = \{\lambda_v\}_{v\in V}$ be a sequence of positive numbers such that $\sup\limits_{u\in V,v=chi(u)}\frac{\lambda_{v}}{\lambda_u} < \infty$ and $\sup\limits_{u\in V,v=chi^2(u)}\frac{\lambda_{v}}{\lambda_u} < \infty$. We denote by $L^p(T, \lambda)$ the space of complex-valued functions $f: V\rightarrow \mathbb{C}$ such that
$$\sum\limits_{v\in V}|f(v)|^p\lambda_{v} < \infty.$$
$L^p(T, \lambda)$ is a Banach space with the norm $$\|f\|_p = \left(\sum\limits_{v\in V}|f(v)|^p\lambda_{v} \right)^{\frac{1}{p}}.$$
Now we define the shifts $S_1$ and $S_2$ on $L^p(T, \lambda)$ by
$$(S_1f)(v) = f(par(v))\;\; \mbox { for } f\in L^p(T, \lambda)$$
and
$$(S_2f)(v) = f(par^2(v))\;\; \mbox { for } f\in L^p(T, \lambda).$$
The weight assumption ensure that $S_1$ and $S_2$ are bounded on $L^p(T, \lambda).$
Let $f$ be any vector in $L^p(T, \lambda),$ if we identify $f$ by $(f(v))_{v\in V},$ then $L^p(T, \lambda)$ can be seen as a Banach sequence space over $I : = V.$ Let $u\in V,$ and denote by $\chi_u$ the characteristic function of vertex $u.$ Define $e_u: = \chi_u.$ Clearly, $(e_u)_{u\in V}$ is an OP-basis of $L^p(T, \lambda).$
In this interpretation, for $l=1,2,$ $S_l$ is a weighted pseudo-shift $T_{b^{(l)}, \varphi_l}$ with
\begin{eqnarray*}
b^{(l)}_v = 1 \mbox{ and } \varphi_l(v) = par^l(v)\;\;\;( v\in V).
\end{eqnarray*}
Thus for $l=1,2,$ the inverse $\psi_l = \varphi_l^{-1} : par^l(V)\rightarrow V$ is given by
\begin{eqnarray*}
 \psi_l(v) = chi^l(v)\;\;\;\mbox{ for } v\in par^l(V).
\end{eqnarray*}
If we set
\begin{eqnarray*}
b^{(l)}_{chi(v)} = 0 \mbox{ and }\lambda_{chi(v)}=0 \;\;\mbox{ if } v\in V\backslash par(V),
\end{eqnarray*}
then by Theorem \ref{main 1}, $S_1, S_2^2$ are densely d-hypercyclic if and only if there exists an increasing sequence $(n_k)_{k\geq 1}$ of positive integers such that for every $v\in V$ and $l=1,2,$

   \begin{eqnarray*}
   \lim \limits_{k\rightarrow \infty}\left\|\left( \prod\limits_{t = 0}^{l n_k -1} b_{\varphi^t_{l}(v)} ^{(l)}\right)^{-1} e_{\varphi^{l n_k}_{l}(v)}\right\|=\lim \limits_{k\rightarrow \infty}\left\|\chi_{par^{l^2n_k}(v)} \right\|=\lim \limits_{k\rightarrow \infty} \left(\lambda_{par^{l^2n_k}(v)}\right)^{\frac{1}{p}}=0,
   \end{eqnarray*}

    \begin{eqnarray*}
    \lim \limits_{k\rightarrow \infty}\left\|\left( \prod\limits_{t = 1}^{l n_k} b_{\psi^t_{l}(v)} ^{(l)}\right) e_{\psi^{l n_k}_{l}(v)}\right\|=\lim \limits_{k\rightarrow \infty}\left\|\chi_{chi^{l^2n_k}(v)} \right\|=\lim \limits_{k\rightarrow \infty} \left(\lambda_{chi^{l^2n_k}(v)}\right)^{\frac{1}{p}}=0
   \end{eqnarray*}
 and for $s=1, l=2,$
\begin{eqnarray*}
   &\;&\lim \limits_{k\rightarrow \infty}\left\|\left( \prod\limits_{t = 0}^{s n_k -1} b_{\varphi^t_{s}(v)} ^{(s)}\right)^{-1}
   \left( \prod\limits_{t = 1}^{l n_k} b_{\psi^t_{l} (\varphi^{s n_k}_{s}(v))}^{(l)} \right) e_{\psi_{l}^{ln_k}(\varphi_{s}^{s n_k}(v))} \right\|\\
    &\;& = \lim \limits_{k\rightarrow \infty}\left\|\chi_{chi^{(l^2-s^2)n_k}(v)} \right\|
    =\lim \limits_{k\rightarrow \infty} \left(\lambda_{chi^{3n_k}(v)}\right)^{\frac{1}{p}}=0,
   \end{eqnarray*}

   \begin{eqnarray*}
    &\;&\lim \limits_{k\rightarrow \infty}\left\|\left( \prod\limits_{t = 0}^{l n_k -1} b_{\varphi^t_{l}(v)} ^{(l)}\right)^{-1}
   \left( \prod\limits_{t = 1}^{s n_k} b_{\psi^t_{s} (\varphi^{l n_k}_{l}(v))} ^{(s)}\right) e_{\psi_{s}^{sn_k}(\varphi_{l}^{l n_k}(v))}\right\|\\
   &\;& = \lim \limits_{k\rightarrow \infty}\left\|\chi_{par^{(l^2-s^2)n_k}(v)} \right\|
    =\lim \limits_{k\rightarrow \infty} \left(\lambda_{par^{3n_k}(v)}\right)^{\frac{1}{p}}=0.
   \end{eqnarray*}

   If we let $s_1, s_2\in \mathbb{R}$ with $1 < s_1 \leq s_2.$  Select an arbitrary fixed vertex and call it $\omega^*.$ For each $u\in V,$ set
 \begin{eqnarray*}
   \lambda_u = \left\{\begin{array}{ll}
  \frac{1}{s_1^d} \;\; \mbox{ if } \omega^* \mbox{ is a descendant of } u,\\
 \\
   \frac{1}{s_2^d} \;\; \mbox{ if } \omega^*  \mbox{ is an ancestor of } u,\\
   \end{array}\right.
   \end{eqnarray*}
where $d = \mbox{ dist } (u, \omega^*).$\\
Fix $u\in V$ with $d = \mbox{ dist } (u, \omega^*)$ . Also, let $n > d,$ we then have that
  \begin{eqnarray*}
   \lambda_{par^{n}(u)}= (\frac{1}{s_1})^{n\pm d},
  \end{eqnarray*}
  where the plus sign corresponds to the case where $\omega^*$ is a descendant of $u$ and the minus sign corresponds to the case where $\omega^*$ is an ancestor of $u.$
  And
  \begin{eqnarray*}
  \lambda_{chi^{n}(u)} = \left\{\begin{array}{ll}
   \frac{1}{s_2^{n - d}} \;\; \mbox{ if } \omega^* \mbox{ is a descendant of } u \mbox{ and } u\in par^n(V),\\
 \\
   \frac{1}{s_2^{n + d}} \;\; \mbox{ if } \omega^*  \mbox{ is an ancestor of } u \mbox{ and } u\in par^n(V),\\
   \\
   0 \;\; \;\;\;\;\;\;\mbox{ if } u\in V\backslash par^n(V).
   \end{array}\right.
  \end{eqnarray*}
  Therefore $\lim \limits_{n\rightarrow \infty}\lambda_{par^{l^2n}(u)} = \lim \limits_{n\rightarrow \infty}\lambda_{chi^{l^2n}(u)}=0$ for $l=1,2$ and $\lim \limits_{n\rightarrow \infty}\lambda_{par^{3n}(u)} = \lim \limits_{n\rightarrow \infty}\lambda_{chi^{3n}(u)}=0.$
\end{example}

\section{Disjoint supercyclic powers of weighted pseudo-shifts}
 In this section, we will extend the characterizations in Theorem \ref{main 1} from d-hypercyclicity to d-supercyclicity and will present some corollaries.

\begin{theorem}\label{Pd-superbi1}
Let $X$ be a Banach sequence space over $I$, in which $(e_i)_{i\in I}$ is an OP-basis. Let integers $1\leq r_1 < r_2 < \cdots < r_{N}$ be given. $N\geq 2,$ for each $1 \leq l \leq N$, let $T_{l} = T_{b^{(l)}, \varphi_l} : X \rightarrow X$ be a weighted pseudo-shift with weight sequence $b^{(l)} = (b_i^{(l)})_{i\in I}$ and injective mapping $\varphi_{l}.$ If for each $1 \leq l \leq N,$ $(\varphi^n_l)_n$ is a run-away sequence and for $1 \leq s < l \leq N,$ the sequence $(\varphi_{s}^{r_sn})_n$ is run-away with $(\varphi_{l}^{r_ln})_n$. Then the following statements are equivalent:

\begin{enumerate}
\item $T_{1} ^{r_1}, T_{2} ^{r_2}, \ldots, T_{N} ^{r_N}$ have a dense set of d-supercyclic vectors.

\item There exists an increasing sequence $(n_k)_{k\geq 1}$ of positive integers such that:

    (H1)For any $i, j\in I$ and $1 \leq l, s\leq N,$

   \begin{eqnarray*}
   \lim \limits_{k\rightarrow \infty} \left\|\left( \prod\limits_{v = 0}^{r_l n_k -1} b_{\varphi^v_l(i)} ^{(l)}\right)^{-1} e_{\varphi_l^{r_l n_k}(i)} \right\|
   \left\|\left( \prod\limits_{v = 1}^{r_s n_k} b_{\psi^v_s(j)} ^{(s)}\right) e_{\psi^{r_s n_k}_s(j)}\right\|=0.
   \end{eqnarray*}

(H2)For every $i\in I$ and  $1 \leq s < l \leq N,$
\begin{eqnarray*}
   \left\{\begin{array}{ll}
    (i)\;\lim \limits_{k\rightarrow \infty} \left\|\left( \prod\limits_{v = 0}^{r_s n_k -1} b_{\varphi^v_s(i)} ^{(s)}\right)^{-1}
   \left( \prod\limits_{v = 1}^{r_l n_k} b_{\psi^v_l (\varphi^{r_s n_k}_s(i))}^{(l)} \right) e_{\psi_{l}^{r_ln_k}(\varphi_{s}^{r_s n_k}(i))} \right\|= 0,\\
   \\
     (ii)\;\lim \limits_{k\rightarrow \infty} \left\|\left( \prod\limits_{v = 0}^{r_l n_k -1} b_{\varphi^v_l(i)} ^{(l)}\right)^{-1}
   \left( \prod\limits_{v = 1}^{r_s n_k} b_{\psi^v_s (\varphi^{r_l n_k}_l(i))} ^{(s)}\right) e_{\psi_{s}^{r_sn_k}(\varphi_{l}^{r_l n_k}(i))}\right\|=0.\\
   \end{array}\right.
   \end{eqnarray*}

   \item  $T_1^{r_1 },  T_{2} ^{r_2}, \ldots,T_{N }^{r_N }$ satisfy the d-Supercyclicity Criterion.
   \end{enumerate}
\end{theorem}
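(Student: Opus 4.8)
The plan is to prove the cycle of implications $(1)\Rightarrow(2)\Rightarrow(3)\Rightarrow(1)$, following the architecture of the proof of Theorem \ref{main 1} but inserting the scalar bookkeeping that supercyclicity forces. The implication $(3)\Rightarrow(1)$ is immediate from the statement following Definition \ref{d-supercriterion}: operators satisfying the d-Supercyclicity Criterion automatically possess a dense set of d-supercyclic vectors. So the substance lies in $(2)\Rightarrow(3)$ and, above all, in $(1)\Rightarrow(2)$.

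For $(2)\Rightarrow(3)$ I would reuse the data of the hypercyclic case: take $X_0=X_1=\cdots=X_N=\mathrm{span}\{e_i:i\in I\}$, dense since $(e_i)$ is an OP-basis, and define $S_{l,n}:X_l\to X$ on the basis by $S_{l,n}(e_i)=(\prod_{v=0}^{r_ln-1}b^{(l)}_{\varphi_l^v(i)})^{-1}e_{\varphi_l^{r_ln}(i)}$, so that $T_l^{r_ln}S_{l,n}(e_i)=e_i$. Then condition (i) of the d-Supercyclicity Criterion reduces on basis vectors to the convergence to $0$ of the two off-diagonal families $T_l^{r_ln_k}S_{s,n_k}(e_i)$ and $T_s^{r_sn_k}S_{l,n_k}(e_i)$ $(s<l)$, which are precisely the quantities in (H2); linearity extends this to all of $X_i$. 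Condition (ii) asks that $\|T_l^{r_ln_k}x\|\,\|\sum_j S_{j,n_k}y_j\|\to 0$; by the triangle inequality and linearity it suffices to treat $x=e_i$ and $y_j=e_{i_j}$, where the product becomes $\|T_l^{r_ln_k}e_i\|\,\|S_{j,n_k}e_{i_j}\|$, and this is literally statement (H1) with its two free indices and two free operator labels. Hence (H1) yields (ii).

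The heart of the argument is $(1)\Rightarrow(2)$. I would fix an enumeration $I=\{i_1,i_2,\dots\}$, set $I_k=\{i_1,\dots,i_k\}$, and show that for each $k$ and each $N_0$ there is $n_k>N_0$ making every quantity in (H1) and (H2) smaller than $1/k$ for $i,j\in I_k$; an inductive choice then produces the increasing sequence $(n_k)$. As in Theorem \ref{main 1}, I would fix $\delta_k$ from equicontinuity of the coordinate projections (cf. \eqref{a3}) and use the run-away and run-away-with hypotheses to realize the disjointness relations \eqref{a4}. The genuinely new step is that the d-supercyclic orbit only approximates targets up to a scalar: using density of the d-supercyclic vectors I choose $x\approx\sum_{i\in I_k}e_i$, and using d-supercyclicity I find a single scalar $\lambda_k\in\mathbb{C}$ and an index $n_k>\max\{N_0,\widetilde{N_0}\}$ with $\lambda_k T_l^{r_ln_k}x\approx\sum_{i\in I_k}e_i$ simultaneously for all $1\le l\le N$. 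These give, for $i\in I_k$, that $x$ is coordinatewise small off $I_k$, that $\lambda_k(\prod_{v=0}^{r_ln_k-1}b^{(l)}_{\varphi_l^v(i)})x_{\varphi_l^{r_ln_k}(i)}\approx1$ (so the relevant products, $x_i$, and $\lambda_k$ are nonzero and invertible with controlled bounds), and that $\lambda_k T_l^{r_ln_k}x$ is coordinatewise small off $I_k$.

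The crux is then a scalar cancellation, which I expect to be the main obstacle. Each factor appearing in (H1) and (H2) can be rewritten, after inserting a suitable $x$-coordinate into numerator and denominator, as $\lambda_k^{\pm1}$ times a coordinate term whose index the run-away relations place outside $I_k$, hence small. For instance the forward factor $\|(\prod_{v=0}^{r_ln_k-1}b^{(l)}_{\varphi_l^v(i)})^{-1}e_{\varphi_l^{r_ln_k}(i)}\|$ is comparable to $|\lambda_k|\,\|x_{\varphi_l^{r_ln_k}(i)}e_{\varphi_l^{r_ln_k}(i)}\|\lesssim|\lambda_k|/k$, while the backward factor $\|(\prod_{v=1}^{r_sn_k}b^{(s)}_{\psi_s^v(j)})e_{\psi_s^{r_sn_k}(j)}\|$ is comparable to $|\lambda_k|^{-1}$ times a coordinate of $\lambda_k T_s^{r_sn_k}x$ at $\psi_s^{r_sn_k}(j)\notin I_k$, hence $\lesssim 1/(k|\lambda_k|)$; their product is $\lesssim 1/k^2$, and the dependence on $\lambda_k$ disappears. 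This is exactly why (H1) takes the symmetric shape of a single product of a forward and a backward term rather than the two separate limits of the hypercyclic theorem, and the estimates must be organized so that every product in (H1) and (H2) carries one $\lambda_k$ matched by one $\lambda_k^{-1}$. For (H2) the same substitution applies, with relation (ii) of \eqref{a4} placing $\psi_l^{r_ln_k}(\varphi_s^{r_sn_k}(i))$ off $I_k$, so that the $\lambda_k$ arising from the inverse $s$-weight cancels the $\lambda_k^{-1}$ arising from the $l$-coordinate of $\lambda_k T_l^{r_ln_k}x$. Finally I would note that, unlike Theorem \ref{main 1}, no ``no periodic points'' hypothesis $(\alpha)$ enters statement (2), consistent with supercyclicity being strictly weaker than hypercyclicity; the run-away conditions are instead assumed outright.
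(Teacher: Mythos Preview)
Your proposal is correct and follows essentially the same route as the paper: the same cycle $(1)\Rightarrow(2)\Rightarrow(3)\Rightarrow(1)$, the same dense sets $X_0=\cdots=X_N=\mathrm{span}\{e_i\}$ and right inverses $S_{l,n}$ for $(2)\Rightarrow(3)$, and for $(1)\Rightarrow(2)$ the same enumeration-and-diagonal scheme with the scalar $\lambda_k$ (the paper's $\alpha$) inserted so that it cancels in the products (H1) and within each (H2) expression. The only small point the paper makes explicit that you leave implicit is why one may take $n_k>\max\{N_0,\widetilde{N_0}\}$: since $X$ is infinite dimensional, removing the finitely many scalings $\{\alpha(T_1^{r_1n}x,\ldots,T_N^{r_Nn}x):n\le p\}$ from the projective orbit leaves a set still dense in $X^N$.
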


\begin{proof}

$(1)\Rightarrow (2).$
 Fix
 $$I : =\{i_1, i_2, \cdots, i_n,\cdots \}$$ and for each $k\in \mathbb{N}$ with $k\geq 1$ set $I_k: = \{i_1, i_2, \cdots, i_k\}.$
To prove $(2)$, it is enough to verify that for any positive integer $k\geq 1$ and any $N_0 \in \mathbb{\mathbb{N}},$ there is an integer $n_k > N_0$ such that:

For any $i, j\in I_k$ and $1 \leq l, s\leq N,$

   \begin{eqnarray*}
   \left\|\left( \prod\limits_{v = 0}^{r_l n_k -1} b_{\varphi^v_l(i)} ^{(l)}\right)^{-1} e_{\varphi^{r_l n_k}_l(i)} \right\|
   \left\|\left( \prod\limits_{v = 1}^{r_s n_k} b_{\psi^v_s(j)} ^{(s)}\right) e_{\psi^{r_s n_k}_s(j)}\right\|< \frac{1}{k}.
   \end{eqnarray*}

 For every $i\in I_k$ and $1 \leq s < l \leq N,$

\begin{eqnarray*}
   \left\{\begin{array}{ll}
   (i)\;\left\|\left( \prod\limits_{v = 0}^{r_s n_k -1} b_{\varphi^v_s (i)} ^{(s)}\right)^{-1}
   \left( \prod\limits_{v = 1}^{r_l n_k} b_{\psi^v_l (\varphi^{r_s n_k}_s(i))} ^{(l)}\right) e_{\psi_{l}^{r_ln_k}(\varphi_{s}^{r_s n_k}(i))}\right\| < \frac{1}{k}, \\
   \\
  (ii)\; \; \left\| \left( \prod\limits_{v = 0}^{r_l n_k -1} b_{\varphi^v_l (i)} ^{(l)}\right)^{-1}
   \left( \prod\limits_{v = 1}^{r_s n_k} b_{\psi^v_s (\varphi^{r_l n_k}_l(i))} ^{(s)}\right) e_{\psi_{s}^{r_sn_k}(\varphi_{l}^{r_l n_k}(i))}\right\| < \frac{1}{k}. \\
   \end{array}\right.
   \end{eqnarray*}
Let integers $k\geq 1$ and $N_0 \in \mathbb{N}$ be given. By assumption there is some $\delta_k > 0$ such that for any $x = (x_i)_{i\in I} \in X,$
   \begin{eqnarray*}
   ||x_i e_i|| < \frac{1}{2k} \;\; \mbox{ for } i\in I, \mbox{ if } ||x|| < \delta_k.
    \end{eqnarray*}
  Let $\widetilde{N_0}\in \mathbb{N}$ be the integer such that
\begin{eqnarray*}
 \left\{\begin{array}{ll}
(i)\;\varphi^{r_ln}_l(I_k)\cap I_k = \emptyset \; (1\leq l \leq N),\\
\\
(ii)\;\psi_{l}^{r_ln}(\varphi_{s}^{r_s n}(I_k)\cap  \varphi^{r_ln}_l(I))\cap I_k = \emptyset \; (1\leq s < l \leq N)
\end{array}\right.
\end{eqnarray*}
for all $n\geq \widetilde{N_0}.$

    Since the d-supercyclic vectors are dense in $X,$ there exists a d-suppercyclic vector $x = (x_i)_{i\in I}\in X$ such that
      \begin{eqnarray*}
\left\|x-\sum\limits_{i\in I_k}e_i\right\| < \delta_k.
   \end{eqnarray*}
    Now, let $A = \{\alpha (T_1^{r_1n}x, T_2^{r_2n}x,\ldots, T_N^{r_Nn}x) : \alpha \in \mathbb{C}, n\in \mathbb{N}\}.$ Obviously, $A$ is dense in $X^N.$ For every $p\in \mathbb{N},$ let $B_p = \{\alpha (T_1^{r_1n}x, T_2^{r_2n}x,\ldots, T_N^{r_Nn}x) : \alpha \in \mathbb{C}, n\in \mathbb{N}, n \leq p\}.$ Since $X$ is an infinite dimensional Banach space, $A\setminus B_p$ remains dense in $X^N$ for any $p\in \mathbb{N}.$ Thus we can find a complex number $\alpha \neq 0$
  and an integer $n_k > \mbox{ max } \{N_0, \widetilde{N_0}\}$ such that for each $1\leq l \leq N,$
      \begin{eqnarray*}
       \left\|\alpha y^{(l)} -\sum_{i\in I_k}e_i\right\| < \delta_k,
     \end{eqnarray*}
  where $y^{(l)} := T_l^{r_ln_k}x = (\prod\limits_{v=0}^{r_ln_k-1}b_{\varphi^v_l(i)}^{(l)}x_{\varphi^{r_ln_k}_l(i)})_{i\in I}=(y^{(l)}_i)_{i\in I}.$\\
  By the continuous inclusion of $X$ into $\mathbb{K}^I,$  we can in addition assume that
     \begin{eqnarray*}
      \left\{\begin{array}{ll}
     (i) \; \sup\limits_{i\in I_k}|x_i-1|\leq \frac{1}{2}\\
     \;\\
     (ii)\sup\limits_{i\in I_k}|\alpha y_i^{(l)}-1|\leq \frac{1}{2} \;\;\mbox{ for }1\leq l \leq N.\\
     \end{array}\right.
   \end{eqnarray*}
  It follows that for any $i\in I_k,$
\begin{eqnarray}\label{3.8}
\left\{\begin{array}{ll}
(i)\; 0 < \frac{1}{|x_i|} \leq 2, \\
\\
(ii)\;x_{\varphi^{r_ln_k}_l(i)} \neq 0\;\; (1\leq l \leq N),\\
\\
(iii)\;0<\left|\left(\alpha\prod\limits_{v=0}^{r_ln_k-1}b_{\varphi^v_l(i)}^{(l)}x_{\varphi^{r_ln_k}_l(i)}\right)^{-1}\right| \leq 2\;\; (1\leq l \leq N).\\
\end{array}\right.
\end{eqnarray}
Repeating a similar argument as in the proof of Theorem \ref{main 1}, one can obtain that:

For each $i\in I_k$ and $1\leq l \leq N,$
\begin{eqnarray}\label{3.9}
\left\| \left(\alpha\prod \limits_{v=0}^{r_ln_k-1} b_{\varphi^{v}_l(i)}^{(l)}\right)^{-1}e_{\varphi^{r_ln_k}_l(i)}\right\|
 <  \frac{1}{k}
\end{eqnarray}
and
\begin{eqnarray}\label{3.10}
\left\| \left(\alpha\prod \limits_{v=1}^{r_ln_k} b_{\psi^{v}_l(i)}^{(l)}\right)e_{\psi^{r_ln_k}_l(i)}\right\|<\frac{1}{k}.
 \end{eqnarray}

 For each $i\in I_k$ and $1\leq s < l \leq N,$
\begin{eqnarray}\label{3.11}
\left\|\alpha\prod\limits_{v=1}^{r_ln_k}b_{\psi^v_l(\varphi_s^{r_sn}(i))}^{(l)}x_{\varphi^{r_sn_k}_s(i)}e_{\psi_{l}^{r_ln_k}(\varphi_{s}^{r_s n_k}(i))}\right\|
< \frac{1}{2k}
\end{eqnarray}
and
\begin{eqnarray}\label{3.12}
\left\|\alpha\prod\limits_{v=1}^{r_sn_k}b_{\psi^v_s(\varphi_l^{r_ln_k}(i))}^{(s)}x_{\varphi^{r_ln_k}_l(i)}e_{\psi_{s}^{r_sn_k}(\varphi_{l}^{r_l n_k}(i))}\right\|
< \frac{1}{2k}.
\end{eqnarray}

 Hence by \eqref{3.9} and \eqref{3.10} for any $i, j\in I_k$ and $1 \leq l, s\leq N,$

   \begin{eqnarray*}
   &\;&\left\|\left( \prod\limits_{v = 0}^{r_l n_k -1} b_{\varphi^v_l(i)} ^{(l)}\right)^{-1} e_{\varphi^{r_l n_k}_l(i)} \right\|
   \left\|\left( \prod\limits_{v = 1}^{r_s n_k} b_{\psi^v_s(j)} ^{(s)}\right) e_{\psi^{r_s n_k}_s(j)}\right\|\\
   &\;& =\left\|\left(\alpha \prod\limits_{v = 0}^{r_l n_k -1} b_{\varphi^v_l(i)} ^{(l)}\right)^{-1} e_{\varphi^{r_l n_k}_l(i)} \right\|
   \left\|\left(\alpha \prod\limits_{v = 1}^{r_s n_k} b_{\psi^v_s(j)} ^{(s)}\right) e_{\psi^{r_s n_k}_s(j)}\right\|
   <\frac{1}{k^2} \leq \frac{1}{k}.
   \end{eqnarray*}

 By \eqref{3.8}, \eqref{3.11} and \eqref{3.12} for each $i\in I_k$ and $1\leq s < l \leq N,$
 \begin{eqnarray}
 &\;& \left\|\left( \prod\limits_{v = 0}^{r_s n_k -1} b_{\varphi^v_s (i)} ^{(s)}\right)^{-1}
   \left( \prod\limits_{v = 1}^{r_l n_k} b_{\psi^v_l (\varphi^{r_s n_k}_s(i))} ^{(l)}\right) e_{\psi_{l}^{r_ln_k}(\varphi_{s}^{r_s n_k}(i))}\right\|\nonumber\\
  &\;& =\left\|\left(\alpha \prod\limits_{v = 0}^{r_s n_k -1} b_{\varphi^v_s (i)} ^{(s)}x_{\varphi^{r_s n_k}_s(i)}\right)^{-1} \alpha \prod\limits_{v = 1}^{r_l n_k} b_{\psi^v_l (\varphi^{r_s n_k}_s(i))} ^{(l)}x_{\varphi^{r_sn_k}_s(i)} e_{\psi_{l}^{r_ln_k}(\varphi_{s}^{r_s n_k}(i))}\right\|\nonumber\\
    &\;&\leq 2 \left\|\alpha\prod\limits_{v = 1}^{r_l n_k} b_{\psi^v_l (\varphi^{r_s n_k}_s(i))} ^{(l)}x_{\varphi^{r_sn_k}_s(i)} e_{\psi_{l}^{r_ln_k}(\varphi_{s}^{r_s n_k}(i))}\right\|< \frac{1}{k}
  \end{eqnarray}
  and
  \begin{eqnarray*}
   &\;&\left\| \left( \prod\limits_{v = 0}^{r_l n_k -1} b_{\varphi^v_l (i)} ^{(l)}\right)^{-1}
   \left( \prod\limits_{v = 1}^{r_s n_k} b_{\psi^v_s (\varphi^{r_l n_k}_l(i))} ^{(s)}\right) e_{ \psi_{s}^{r_sn_k}(\varphi_{l}^{r_l n_k}(i))}\right\|\\
    &\;& = \left\|\left(\alpha \prod\limits_{v = 0}^{r_l n_k -1} b_{\varphi^v_l (i)} ^{(l)}x_{\varphi^{r_ln_k}_l(i)}\right)^{-1}\left(\alpha \prod\limits_{v = 1}^{r_s n_k} b_{\psi^v_s (\varphi^{r_l n_k}_l(i))} ^{(s)}\right) x_{\varphi^{r_ln_k}_l(i)} e_{\psi_{s}^{r_sn_k}(\varphi_{l}^{r_l n_k}(i))}\right\|\\
    &\;& <\frac{1}{k}.
    \end{eqnarray*}
   Thus we complete the proof.

  $(2)\Rightarrow (3).$ Suppose (2) holds and let $(n_k)_{k \geq 1}$ be an increasing sequence of positive integers satisfying \textit{(H1)} and \textit{(H2)}.
  Let's show that $T_{1} ^{r_1}, T_{2} ^{r_2}, \ldots, T_{N} ^{r_N}$ satisfy the d-Supercyclicity Criterion. Set $X_0 = X_1 = \cdots = X_N = span \{ e_i : i\in I\}.$ For each $1 \leq l \leq N$ and $n\in \mathbb{N}$ with $n\geq 1,$ we consider the linear mappings: $S_{l,n} : X_l \rightarrow X$ given by
$$ S_{l, n}(e_i) = \left(\prod \limits_{v=0}^{r_ln-1} b_{\varphi^{v}_l(i)}^{(l)}\right)^{-1}e_{\varphi^{r_ln}_l(i)}\;\;\;\;\;(i\in I).$$
 The same argument as used in the proof of $(2)\Rightarrow (3)$ in Theorem \ref{main 1} yields that $(i)$ of Definition \ref{d-supercriterion} is satisfied. So we just need to check that $T_{1} ^{r_1}, T_{2} ^{r_2}, \ldots, T_{N} ^{r_N}$ satisfy condition $(ii)$ in Definition \ref{d-supercriterion} with respect to $(n_k)_{k \geq 1}$. Let $y_0, y_1, \ldots, y_N \in span \{ e_i : i\in I\},$ there exists an $k_0 \in \mathbb{N}$ such that
   \begin{eqnarray*}
   y_i = \sum\limits_{j\in I_{k_0}} y_{i,j}e_j \;\; (0 \leq i \leq N).
   \end{eqnarray*}
   Set $C : = \max \{|y_{i,j}| : 0 \leq i \leq N, j\in I_{k_0}\}.$
   By \textit{(H1)}, for any $i,j \in I$ and $1 \leq l, s \leq N,$
  \begin{eqnarray*}
  &\;&\lim \limits_{k\rightarrow \infty}\|T_l^{r_l n_k} e_i\|\|S_{s, n_k}e_j\|\\
  &\;&= \lim \limits_{k\rightarrow \infty}\left\|\left(\prod \limits_{v=1}^{r_ln_k} b_{\psi^{v}_l(i)}^{(l)}\right)e_{\psi^{r_ln_k}_l(i)}\right\|
  \left\|\left(\prod \limits_{v=0}^{r_s n_k-1} b_{\varphi^{v}_s(j)}^{(s)}\right)^{-1}e_{\varphi^{r_s n_k}_s(j)}\right\|=0.
  \end{eqnarray*}
  It follows that
  \begin{eqnarray*}
  &\;&\left\|T_l^{r_l n_k} y_0\right\| \left\|\sum \limits_{s=1}^{N}S_{s, n_k}y_s\right\|\\
  &=&\left\|\sum\limits_{j\in I_{k_0}}y_{0,j}\prod \limits_{v=1}^{r_ln_k} b_{\psi^{v}_l(j)}^{(l)}e_{\psi^{r_ln_k}_l(j)}\right\|
  \left\|\sum \limits_{s=1}^{N}\sum\limits_{j\in I_{k_0}}y_{s,j}\left(\prod \limits_{v=0}^{r_s n_k-1} b_{\varphi^{v}_s(j)}^{(s)}\right)^{-1}e_{\varphi^{r_s n_k}_s(j)}\right\|\\
   &\leq&C^2 \left(\sum\limits_{j\in I_{k_0}}\left\|\prod \limits_{v=1}^{r_ln_k} b_{\psi^{v}_l(j)}^{(l)}e_{\psi^{r_ln_k}_l(j)}\right\|\right) \left(\sum \limits_{s=1}^{N}\sum\limits_{j\in I_{k_0}}\left\|\left(\prod \limits_{v=0}^{r_s n_k-1} b_{\varphi^{v}_s(j)}^{(s)}\right)^{-1}e_{\varphi^{r_s n_k}_s(j)}\right\|\right)\\
   \\
   & \overrightarrow{k\rightarrow\infty}& 0.
  \end{eqnarray*}

$(3)\Rightarrow (1).$ This implication is obvious.
\end{proof}

Next, we consider the special case $\varphi_1 = \varphi_2 = \cdots\cdots = \varphi_N$ in Theorem \ref{Pd-superbi1}.
\begin{corollary}\label{Pd-superbi}
Let $X$ be a Banach sequence space over $I$, in which $(e_i)_{i\in I}$ is an OP-basis. $N\geq 2,$ for each $1 \leq l \leq N$, let $T_{l} = T_{b^{(l)}, \varphi} : X \rightarrow X$ be a weighted pseudo-shift with weight sequence $b^{(l)} = (b_i^{(l)})_{i\in I}.$   Then for any integers $1\leq r_1 < r_2 < \cdots\cdots < r_{N},$ the following are equivalent:

\begin{enumerate}
\item $T_{1} ^{r_1}, T_{2} ^{r_2}, \ldots, T_{N} ^{r_N}$ have a dense set of d-supercyclic vectors.

\item $(\alpha)$ The mapping $\varphi : I\rightarrow I$ has no periodic points.

     \quad  \; $(\beta)$ There exists an increasing sequence $(n_k)_{k\geq 1}$ of positive integers such that:

    (H1)For any $i, j\in I$ and $1 \leq l, s\leq N$ we have

   \begin{eqnarray*}
   \lim \limits_{k\rightarrow \infty} \left\|\left( \prod\limits_{v = 0}^{r_l n_k -1} b_{\varphi^v(i)} ^{(l)}\right)^{-1} e_{\varphi^{r_l n_k}(i)} \right\|
   \left\|\left( \prod\limits_{v = 1}^{r_s n_k} b_{\psi^v(j)} ^{(s)}\right) e_{\psi^{r_s n_k}(j)}\right\|=0.
   \end{eqnarray*}

(H2)For every $i\in I$ and any  $1 \leq s < l \leq N,$
\begin{eqnarray*}
   \left\{\begin{array}{ll}
    (i)\;\lim \limits_{k\rightarrow \infty} \left\|\left( \prod\limits_{v = 0}^{r_s n_k -1} b_{\varphi^v(i)} ^{(s)}\right)^{-1}
   \left( \prod\limits_{v = 1}^{r_l n_k} b_{\psi^v (\varphi^{r_s n_k}(i))}^{(l)} \right) e_{\psi^{(r_l-r_s) n_k}(i)} \right\|= 0,\\
   \\
     (ii)\;\lim \limits_{k\rightarrow \infty} \left\|\left( \prod\limits_{v = 0}^{r_l n_k -1} b_{\varphi^v(i)} ^{(l)}\right)^{-1}
   \left( \prod\limits_{v = 1}^{r_s n_k} b_{\psi^v (\varphi^{r_l n_k}(i))} ^{(s)}\right) e_{\varphi^{(r_l-r_s) n_k}(i)}\right\|=0.\\
   \end{array}\right.
   \end{eqnarray*}

   \item  $T_1^{r_1 },  T_{2} ^{r_2}, \ldots,T_{N }^{r_N }$ satisfy the d-Supercyclicity Criterion.
   \end{enumerate}
\end{corollary}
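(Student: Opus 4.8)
The plan is to deduce Corollary \ref{Pd-superbi} from Theorem \ref{Pd-superbi1} by specializing to $\varphi_1 = \varphi_2 = \cdots = \varphi_N = \varphi$. The only genuine work is to see that, in this situation, the standing run-away hypotheses of the theorem are supplied by condition $(\alpha)$, and that the index expressions in $(\beta)$ are the obvious simplifications of those in Theorem \ref{Pd-superbi1}.

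First I would record that $(\alpha)$ is forced by $(1)$. If $x$ is a d-supercyclic vector for $T_1^{r_1}, \ldots, T_N^{r_N}$, then projecting the dense set $\{\alpha(T_1^{r_1 n}x, \ldots, T_N^{r_N n}x) : \alpha \in \mathbb{C}, n\in \mathbb{N}\}$ onto the $l$-th coordinate shows that $x$ is supercyclic for $T_l^{r_l}$; since $\mathbb{C}\cdot \mathrm{Orb}(T_l^{r_l}, x) \subseteq \mathbb{C}\cdot \mathrm{Orb}(T_l, x)$, the operator $T_l = T_{b^{(l)}, \varphi}$ is itself supercyclic. By the characterization of supercyclic weighted pseudo-shifts in \cite{LZ2}, $\varphi$ then has no periodic points, which is $(\alpha)$; this is the supercyclic analogue of the hypercyclic argument used at the start of the proof of Theorem \ref{main 1}.

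Next I would verify that $(\alpha)$ delivers the run-away hypotheses of Theorem \ref{Pd-superbi1}. Since $\varphi$ is injective with no periodic points, $(\varphi^n)_n$ is run-away. For $1 \leq s < l \leq N$ and a finite set $I_0$, injectivity of $\varphi$ gives, for each $n$, that $\varphi^{r_s n}(I_0) \cap \varphi^{r_l n}(I_0) \neq \emptyset$ if and only if $\varphi^{(r_l - r_s)n}(I_0) \cap I_0 \neq \emptyset$: indeed a common point $\varphi^{r_s n}(i) = \varphi^{r_l n}(j)$ yields, after applying $\psi^{r_s n}$, the identity $i = \varphi^{(r_l - r_s)n}(j)$, and conversely. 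As $r_l - r_s \geq 1$ we have $(r_l - r_s)n \to \infty$, so the run-away property of $(\varphi^m)_m$ forces the right-hand intersection to be empty for all large $n$; hence $(\varphi^{r_s n})_n$ is run-away with $(\varphi^{r_l n})_n$. Thus whenever $(\alpha)$ holds, every hypothesis of Theorem \ref{Pd-superbi1} is in force.

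Finally I would match the conditions and assemble the implications. With $\varphi_l = \varphi$ for every $l$ and $r_l > r_s$, the relations $\psi^{r_l n_k}(\varphi^{r_s n_k}(i)) = \psi^{(r_l - r_s)n_k}(i)$ and $\psi^{r_s n_k}(\varphi^{r_l n_k}(i)) = \varphi^{(r_l - r_s)n_k}(i)$ hold, since $\psi^{r_s n_k}\circ \varphi^{r_s n_k}$ is the identity on the relevant image; substituting them into (H1) and (H2) of Theorem \ref{Pd-superbi1} turns those conditions verbatim into (H1) and (H2) of $(\beta)$. The three implications then close up as follows: $(3)\Rightarrow(1)$ is the general fact stated after Definition \ref{d-supercriterion}; $(1)$ yields $(\alpha)$ by the second paragraph, after which the run-away verification and Theorem \ref{Pd-superbi1} give $(1)\Rightarrow(2)$; and given $(2)$, condition $(\alpha)$ again supplies the run-away hypotheses, so Theorem \ref{Pd-superbi1} gives $(2)\Rightarrow(3)$. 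I expect the one point needing genuine care to be the run-away reduction $\varphi^{r_s n}(I_0)\cap \varphi^{r_l n}(I_0)=\emptyset$, as it is the only place where the differing powers $r_s, r_l$ interact and where injectivity (through $\psi$) must be used to collapse the two shifts to the single power $\varphi^{(r_l - r_s)n}$.
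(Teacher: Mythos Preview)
Your overall strategy matches the paper's: both reduce everything to Theorem \ref{Pd-superbi1} and isolate $(1)\Rightarrow(\alpha)$ as the only extra work. You are in fact more explicit than the paper about two points it leaves tacit --- that $(\alpha)$ supplies the run-away hypotheses of Theorem \ref{Pd-superbi1} (via your $\varphi^{(r_l-r_s)n}$ reduction), and that the index identities $\psi^{r_l n_k}\!\circ\varphi^{r_s n_k}=\psi^{(r_l-r_s)n_k}$, $\psi^{r_s n_k}\!\circ\varphi^{r_l n_k}=\varphi^{(r_l-r_s)n_k}$ convert the conditions of the theorem into those of $(\beta)$. That part is fine.

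The difference, and the place to be careful, is your argument for $(1)\Rightarrow(\alpha)$. You pass to a \emph{single} operator $T_l$ and invoke \cite{LZ2} to conclude that supercyclicity of one pseudo-shift forces $\varphi$ to have no periodic points. That implication is not true in general: if $\varphi$ has a fixed point $i_0$ and is otherwise free of periodic points, then $T_{b,\varphi}$ decomposes as $b_{i_0}I_{\mathbb{C}}\oplus T'$ on $\mathbb{C}e_{i_0}\oplus \{x:x_{i_0}=0\}$, and this direct sum is supercyclic whenever $T'/b_{i_0}$ is hypercyclic (take, e.g., $I=\{*\}\sqcup\mathbb{Z}$ in $\ell^2$ with a hypercyclic bilateral shift on the $\mathbb{Z}$ part). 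So supercyclicity of one $T_l$ does not exclude fixed points of $\varphi$, and the appeal to \cite{LZ2} cannot carry that case. The paper instead argues directly with \emph{two} of the operators: if $\varphi^M(i)=i$, the $i$-th coordinates of $T_1^{r_1 n}x$ and $T_2^{r_2 n}x$ have a ratio which, being built from periodic weight products, ranges over at most a geometric-type sequence and is never dense in $\mathbb{K}$; hence the projected set cannot be dense in $\mathbb{K}^2$, contradicting d-supercyclicity. This two-operator ratio argument genuinely uses $N\ge 2$ and handles fixed points and higher periods uniformly; you should replace your single-operator step with it.
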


\begin{proof}
By Theorem \ref{Pd-superbi1}, we just need to prove that $(1)$ implies $(\alpha).$ Suppose on the contrary that $\varphi$ has a periodic point, then there exists an $i\in I$ and an integer $M\geq 1$ such that $\varphi ^M (i) = i.$ For each $1 \leq l \leq N$ and any $x\in X,$ the entry of $T_l^{r_ln} x$ at position $i$ is $\left(\prod \limits_{v=0}^{r_ln-1}b_{\varphi^v(i)}^{(l)}\right)x_{\varphi^{r_ln}(i)}.$ Since $\varphi ^M (i) = i,$ both $\left(b_{\varphi^v(i)}^{(l)}\right)_v$ and $(x_{\varphi^{r_ln}(i)})_n$ are periodic sequences. Which implies that  $\left\{\frac{\left(\prod \limits_{v=0}^{r_1n-1}b_{\varphi^v(i)}^{(1)}\right)x_{\varphi^{r_1n}(i)}}{\left(\prod \limits_{v=0}^{r_2n-1}b_{\varphi^v(i)}^{(2)}\right)x_{\varphi^{r_2n}(i)}}\right\}_{n\in\mathbb{N}}$ can not be dense in $\mathbb{K},$ it follows that the set $$\left\{\left(\alpha\prod \limits_{v=0}^{r_1n-1}b_{\varphi^v(i)}^{(1)} x_{\varphi^{r_1n}(i)}, \alpha \prod \limits_{v=0}^{r_2n-1}b_{\varphi^v(i)}^{(2)}x_{\varphi^{r_2n}(i)}\right) : \alpha \in \mathbb{C},n\in\mathbb{N}\right\}$$ can not be dense in $\mathbb{K}^2.$ By continuous inclusion of $X$ into $\mathbb{K}^I,$ the set $$\{\alpha\left(T_1^{r_1n} x, T_2^{r_2n} x, \ldots, T_N^{r_Nn} x \right): \alpha\in \mathbb{C}, n\in \mathbb{N}\}$$ can not be dense in $X^N,$ which is contrary to condition (1). Hence $\varphi$ has no periodic points.
\end{proof}

If the mapping $\varphi$ satisfy that each $i\in I$ lies outside $\varphi^n(I)$ for all sufficiently large n, which implies in particular that the sequence $(\varphi^n)_n$ is run-away. In this case, for every $i\in I,$ $ e_{\psi^n(i)} = 0$ is eventually 0 when $n$ is large enough by the definition of $\psi^n.$ Thus $(H1)$ and $(i)$ of $(H2)$ in Corollary \ref{Pd-superbi} is automatically satisfied. Therefore the following result holds.
\begin{corollary}\label{uni}
Let $X$ be a Banach sequence space over $I$, in which $(e_i)_{i\in I}$ is an OP-basis. Let integers $1\leq r_1 < r_2 < \cdots\cdots < r_{N}$ be given. For each $1 \leq l \leq N$, let $T_{l} = T_{b^{(l)},\varphi} : X \rightarrow X$ be a weighted pseudo-shift with weight sequence $b^{(l)} = (b_i^{(l)})_{i\in I},$ so that each $i\in I$ lies outside $\varphi^n(I)$ for all sufficiently large n. Then  the following assertions are equivalent:

\begin{enumerate}
\item $T_{1} ^{r_1}, T_{2} ^{r_2}, \ldots, T_{N} ^{r_N}$ are densely d-supercyclic.

\item There exists an increasing sequence $(n_k)_{k\geq 1}$ of positive integers such that for every $i\in I$ we have:

\begin{eqnarray*}
    \lim \limits_{k\rightarrow \infty} \left\| \left( \prod\limits_{v = 0}^{r_l n_k -1} b_{\varphi^v(i)} ^{(l)}\right)^{-1}
   \left( \prod\limits_{v = 1}^{r_s n_k} b_{\psi^v (\varphi^{r_l n_k}(i))} ^{(s)}\right) e_{\varphi^{(r_l-r_s) n_k}(i)}\right\|= 0\;\;(1 \leq s < l \leq N).
   \end{eqnarray*}

\item  $T_1^{r_1 }, T_{2} ^{r_2}, \ldots, T_{N }^{r_N }$ satisfy the d-Supercyclicity Criterion.
\end{enumerate}
\end{corollary}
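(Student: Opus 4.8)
The plan is to deduce this from Corollary \ref{Pd-superbi} by showing that the extra hypothesis on $\varphi$ forces all but one of the conditions appearing in part (2) of that corollary to become automatic, so that the list collapses onto the single condition displayed here.

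First I would record the structural consequences of the assumption that every $i\in I$ lies outside $\varphi^n(I)$ for all sufficiently large $n$. Fixing $i$, there is an $n_i$ with $i\notin\varphi^n(I)$ for $n\ge n_i$; in particular $\psi^n(i)$ is ``undefined'' and hence $e_{\psi^n(i)}=0$ for $n\ge n_i$. This same property supplies everything needed to invoke Corollary \ref{Pd-superbi}: the map $\varphi$ has no periodic point (a relation $\varphi^M(i)=i$ would place $i$ in $\varphi^{nM}(I)$ for every $n$, contradicting the assumption), the sequence $(\varphi^n)_n$ is run-away, and for $s<l$ the sequence $(\varphi^{r_sn})_n$ is run-away with $(\varphi^{r_ln})_n$ (if $\varphi^{r_sn}(i)=\varphi^{r_ln}(i')$ with $i,i'$ ranging over a fixed finite set, injectivity of $\varphi$ gives $i=\varphi^{(r_l-r_s)n}(i')$, which is impossible once $(r_l-r_s)n$ is large).

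Next I would verify that, with these facts in hand, condition (H1) and part (i) of (H2) in Corollary \ref{Pd-superbi} hold automatically for every increasing sequence $(n_k)$. For (H1), the second factor $\bigl\|(\prod_{v=1}^{r_sn_k}b_{\psi^v(j)}^{(s)})\,e_{\psi^{r_sn_k}(j)}\bigr\|$ vanishes as soon as $r_sn_k\ge n_j$, so the product tends to $0$. For part (i) of (H2), the vector $e_{\psi^{(r_l-r_s)n_k}(i)}$ equals $0$ once $(r_l-r_s)n_k\ge n_i$, so that limit is $0$ as well. By contrast, part (ii) of (H2) involves $e_{\varphi^{(r_l-r_s)n_k}(i)}$, a $\varphi$-iterate rather than a $\psi$-iterate, which need not vanish; this is precisely the surviving requirement, and it is identical to condition (2) stated in the present corollary.

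Finally I would assemble the pieces: condition (2) here is exactly what remains of $(\beta)$ in condition (2) of Corollary \ref{Pd-superbi} after the automatically satisfied clauses are discarded, while $(\alpha)$ there is free by the first step. Hence the chain (1)$\Leftrightarrow$(2)$\Leftrightarrow$(3) follows immediately from Corollary \ref{Pd-superbi}. I do not anticipate a genuine obstacle; the only point demanding care is the bookkeeping of iteration indices, namely making sure it is the $\psi$-iterates that are eventually killed and not the $\varphi$-iterate in part (ii) of (H2), so that the nontrivial condition is not inadvertently thrown away.
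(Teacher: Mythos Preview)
Your proposal is correct and follows essentially the same approach as the paper: deduce the result from Corollary~\ref{Pd-superbi} by observing that the hypothesis forces $e_{\psi^n(i)}=0$ eventually, so that (H1) and part (i) of (H2) become automatic and only part (ii) of (H2) survives. The paper's justification is terser (it is contained in the short paragraph preceding the corollary), whereas you additionally spell out why $\varphi$ has no periodic points and why the run-away-with condition holds; these details are implicit in the paper but your care in checking them is appropriate.
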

\begin{example}
Let $X = \ell^2(\mathbb{N}),$ and integers $1\leq r_1 < r_2 < \cdots\cdots < r_{N}\;(N\geq 2)$ be given. For each $1\leq l \leq N,$ let $(a_{l,n})_{n=1}^\infty$ be a bounded sequence of nonzero scalars and $T_l$ be the unilateral backward weighted shift on $X$ defined by $$T_l e_0 =0 \mbox{ and } T_l e_j = a_{l,j}e_{j-1} \;\;\mbox{ for }j\geq 1,$$
where $(e_j)_{j\in \mathbb{N}}$ is the canonical basis of $\ell^2(\mathbb{N}).$ Clearly, in this case, $X$ is a Banach sequence space over $I = \mathbb{N}$ with OP-basis $(e_j)_{j\in \mathbb{N}}.$ Each $T_l$ is the pseudo-shift $T_{b^{(l)},\varphi}$ with
$$b_i^{(l)} = a_{l, i+1} \mbox{ and } \varphi (i) = i+1 \mbox{ for any } i\in \mathbb{N}.$$
 By Corollary \ref{uni}, $T_{1} ^{r_1}, T_{2} ^{r_2}, \ldots, T_{N} ^{r_N}$ are densely d-supercyclic if and only if
there exists an increasing sequence $(n_k)_{k\geq 1}$ of positive integers such that for every $i\in I$ and $1 \leq s < l \leq N,$
\begin{eqnarray*}
   &\;& \lim \limits_{k\rightarrow \infty} \left\| \left( \prod\limits_{v = 0}^{r_l n_k -1} b_{\varphi^v(i)} ^{(l)}\right)^{-1}
   \left( \prod\limits_{v = 1}^{r_s n_k} b_{\psi^v (\varphi^{r_l n_k}(i))} ^{(s)}\right) e_{\varphi^{(r_l-r_s) n_k}(i)}\right\| \\
   &\;&= \lim \limits_{k\rightarrow \infty}\left| \left( \prod\limits_{v = i+1}^{i+r_l n_k } a_{l,v}\right)^{-1}
   \left(\prod\limits_{v = i+(r_l-r_s)n_k+1}^{i+r_l n_k} a_{s,v}\right) \right| =0.
   \end{eqnarray*}
\end{example}

\begin{remark}
Recently, disjoint hypercyclic and disjoint supercyclic weighted translations on locally compact group $G$ were studied in \cite{CC} and \cite{ZZ}. It is easy to see that when $G$ is discrete, these weighted translations are special cases of pseudo-shifts. Theorefore by Theorem \ref{d-hyper1} and Corollary \ref{Pd-superbi}, we can also get the equivalent conditions for weighted translations on locally compact discrete groups to be disjoint hypercyclic and disjoint supercyclic.
\end{remark}
\section{Disjoint supercyclic operator weighted shifts on $\ell^2(\mathbb{Z,\mathcal{K}})$ }

  The bilateral operator weighted shifts on space $\ell^2(\mathbb{Z,\mathcal{K}})$ were studied by Hazarika and Arora in \cite{HA}. In \cite{WZ}, we proved that the bilateral operator weighted shifts are special cases of weighted pseudo-shifts. In this section, we will use Corollary \ref{Pd-superbi} to characterize the d-supercyclicity of bilateral operator weighted shifts on $\ell^2(\mathbb{Z,\mathcal{K}})$. First of all, let's recall some terminology.

 Let $\mathcal{K}$ be a separable complex Hilber space with an orthonormal basis $\{f_k\}_{k=0}^{\infty}.$ Define a separable Hilbert space
     $$\ell^2(\mathbb{Z,\mathcal{K}}):=\{x=(\ldots, x_{-1}, [x_0], x_1, \ldots): x_i\in \mathcal{K} \mbox{ and } \sum\limits_{i\in\mathbb{Z}}||x_i||^2<\infty\}$$
   under the inner product $\langle x, y\rangle=\sum\limits_{i\in\mathbb{Z}}\langle x_i, y_i\rangle_{\mathcal{K}}$.

   Let $\{A_{n}\}_{n=-\infty}^{\infty}$ be a uniformly bounded sequence of invertible positive diagonal operators on $\mathcal{K}$. The bilateral forward and backward  operator weighted shifts on $\ell^2(\mathbb{Z,\mathcal{K}})$ are defined as follows:

   $(i)$ The bilateral forward operator weighted shift $T$ on $\ell^2(\mathbb{Z,\mathcal{K}})$ is defined by
   $$ T(\ldots, x_{-1}, [x_0], x_1, \ldots)=(\ldots, A_{-2} x_{-2}, [A_{-1} x_{-1}], A_0 x_{0}, \ldots).$$
   Since $\{A_{n}\}_{n=-\infty}^{\infty}$  is uniformly bounded, $T$ is bounded and $||T||=\sup \limits_{i\in \mathbb{Z}}||A_i||<\infty.$ For $n > 0$,
  $$T^{n}(\ldots, x_{-1}, [x_0], x_1, \ldots)=(\ldots, y_{-1}, [y_0], y_1,\ldots),$$ where $y_j=\prod \limits_{s=0}^{n-1}A_{j+s-n}x_{j-n}.$

  $(ii)$ The  bilateral backward operator weighted shift $T$ on $\ell^2(\mathbb{Z,\mathcal{K}})$ is defined by
   $$ T(\ldots, x_{-1}, [x_0], x_1, \ldots)=(\ldots, A_{0} x_{0}, [A_{1} x_{1}], A_2 x_{2}, \ldots).$$

   Then
   $$T^{n}(\ldots, x_{-1}, [x_0], x_1, \ldots)=(\ldots, y_{-1}, [y_0], y_1, \ldots),$$
   where $y_j=\prod \limits_{s=1}^{n}A_{j+s}x_{j+n}.$

Since each $A_n$ is an invertible diagonal operator on $\mathcal{K},$ we conclude that
 \begin{eqnarray*}
||A_n||=\sup \limits_{k}||A_n f_k||\mbox{ and }  ||A_n^{-1}||=\sup \limits_{k}||A_n^{-1} f_k||.
\end{eqnarray*}

Now we are ready to state the main result.

 \begin{theorem}\label{forward}
Let $N \geq 2,$ for each $ 1 \leq l \leq N,$ let $T_l$ be a bilateral forward operator weighted shift on $\ell^2(\mathbb{Z,\mathcal{K}})$ with weight sequence $\{A_n^{(l)}\}_{n=-\infty}^{\infty},$  where $\{A_n^{(l)}\}_ {n=-\infty}^{\infty}$ is a uniformly bounded sequence of positive invertible diagonal operators on $\mathcal{K}.$
 Then for any integers $1\leq r_1 < r_2 < \cdots\cdots < r_{N},$ the following statements are equivalent:

\begin{enumerate}
\item $T_{1} ^{r_1}, T_{2} ^{r_2}, \ldots, T_{N} ^{r_N}$ are densely d-supercyclic.

\item There exists an increasing sequence $(n_k)_{k \geq 1}$ of positive integers such that:

      For every  $i_1, i_2\in \mathbb{N},$ $j_1, j_2\in \mathbb{Z}$ and
 $1 \leq l, s \leq N,$

   \begin{eqnarray*}
   \lim\limits_{k\rightarrow \infty}\left\|\left(\prod\limits_{v=j_1-r_ln_k}^{j_1-1}(A_v^{(l)})^{-1}\right)f_{i_1}\right\| \left\|\left(\prod\limits_{v=j_2}^{j_2+r_sn_k-1}A_v^{(s)}\right) f_{i_2}\right\|=0.
   \end{eqnarray*}

 For every  $i\in \mathbb{N},$ $j\in \mathbb{Z}$ and $1 \leq s < l \leq N,$
\begin{eqnarray*}
   \left\{\begin{array}{ll}
    \lim\limits_{k\rightarrow\infty}\left\|\left(\prod\limits_{v=j-r_sn_k}^{j-1}(A_v^{(s)})^{-1}\right)\left(\prod\limits_{v=j-r_sn_k}^{j+(r_l-r_s)n_k-1}A_v^{(l)}\right) f_{i}\right\|=0,\\
    \\
    \lim\limits_{k\rightarrow \infty}\left\|\left(\prod\limits_{v=j-r_ln_k}^{j-1}(A_v^{(l)})^{-1}\right)\left(\prod\limits_{v=j-r_ln_k}^{j-(r_l-r_s)n_k-1}A_v^{(s)} \right) f_{i}\right\|=0.
   \end{array}\right.
   \end{eqnarray*}
   \end{enumerate}
\end{theorem}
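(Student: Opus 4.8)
The plan is to realize $\ell^2(\mathbb{Z},\mathcal{K})$ as a Banach (indeed Hilbert) sequence space over the index set $I:=\mathbb{Z}\times\mathbb{N}$ and to rewrite each bilateral forward operator weighted shift $T_l$ as a weighted pseudo-shift driven by one and the same injective map, so that Corollary \ref{Pd-superbi} applies directly. Expanding each coordinate $x_j\in\mathcal{K}$ in the orthonormal basis as $x_j=\sum_{i}x_{(j,i)}f_i$, I identify $x=(\ldots,x_{-1},[x_0],x_1,\ldots)$ with the scalar family $(x_{(j,i)})_{(j,i)\in I}$. The vectors $e_{(j,i)}$ (the basis vector $f_i$ placed in slot $j$) then form an OP-basis, and one has $\|c\,e_{(j,i)}\|=|c|$ for every scalar $c$, so that every norm appearing below collapses to a product of scalars.

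Since each $A_n^{(l)}$ is a positive invertible diagonal operator, I write $A_n^{(l)}f_i=a_{n,i}^{(l)}f_i$ with $a_{n,i}^{(l)}>0$. A direct computation, already carried out in \cite{WZ}, shows that $T_l=T_{b^{(l)},\varphi}$ where the common injective map is $\varphi(j,i)=(j-1,i)$, with inverse $\psi(j,i)=(j+1,i)$, and the weights are $b_{(j,i)}^{(l)}=a_{j-1,i}^{(l)}$. Because $\varphi^M(j,i)=(j-M,i)\neq(j,i)$ for every $M\geq1$, the map $\varphi$ has no periodic points, so Corollary \ref{Pd-superbi} is applicable to $T_1^{r_1},\ldots,T_N^{r_N}$ and reduces the theorem to translating conditions (H1) and (H2) of that corollary into statements about the operator weights.

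The substance of the argument is this translation, which amounts to reindexing telescoping products. Using $\varphi^v(j,i)=(j-v,i)$ together with $b_{(j-v,i)}^{(l)}=a_{j-v-1,i}^{(l)}$, the substitution $w=j-1-v$ gives
\begin{eqnarray*}
\prod_{v=0}^{r_ln_k-1}b_{\varphi^v(j,i)}^{(l)}=\prod_{w=j-r_ln_k}^{j-1}a_{w,i}^{(l)},
\end{eqnarray*}
and since $e_{\varphi^{r_ln_k}(j,i)}$ has norm one, the first factor of (H1) becomes $\big\|\big(\prod_{v=j-r_ln_k}^{j-1}(A_v^{(l)})^{-1}\big)f_i\big\|$. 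The second factor of (H1) is handled the same way through $\psi^v(j,i)=(j+v,i)$, yielding $\prod_{w=j_2}^{j_2+r_sn_k-1}a_{w,i_2}^{(s)}$. For (H2) one combines the inverse product $\prod_{w=j-r_sn_k}^{j-1}(a_{w,i}^{(s)})^{-1}$ coming from $\varphi^v$ with the product $\prod_{w=j-r_sn_k}^{j+(r_l-r_s)n_k-1}a_{w,i}^{(l)}$ coming from $\psi^v$ applied at $\varphi^{r_sn_k}(j,i)$, and symmetrically for the second expression. In each case the $\ell^2(\mathbb{Z},\mathcal{K})$-norm collapses to the $\mathcal{K}$-norm of the indicated diagonal product applied to $f_i$, and after renaming $j\mapsto j_1,j_2,j$ and $i\mapsto i_1,i_2,i$ this is exactly condition (2) of the theorem, finishing the equivalence.

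The main obstacle is precisely the bookkeeping in this last step: one must keep track of the one-unit offset $b_{(j,i)}^{(l)}=a_{j-1,i}^{(l)}$ together with the opposite directions in which $\varphi^v$ and $\psi^v$ move the index, so that the four product ranges line up with $\prod_{v=j-r_ln_k}^{j-1}$, with $\prod_{v=j_2}^{j_2+r_sn_k-1}$, and with the mixed ranges in (H2). Once the substitutions are arranged correctly, the positivity of the diagonal weights turns every norm into a plain product of scalars, and no analytic input beyond Corollary \ref{Pd-superbi} is required.
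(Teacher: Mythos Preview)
Your proposal is correct and follows essentially the same route as the paper: you realize $\ell^2(\mathbb{Z},\mathcal{K})$ as a sequence space over $\mathbb{Z}\times\mathbb{N}$ with the natural OP-basis, identify each $T_l$ with the pseudo-shift $T_{b^{(l)},\varphi}$ for $\varphi(j,i)=(j-1,i)$ and $b_{(j,i)}^{(l)}=a_{j-1,i}^{(l)}$, and then apply Corollary~\ref{Pd-superbi}, reindexing the resulting products exactly as the paper does. The only cosmetic differences are the order of the index pair and that you make the absence of periodic points for $\varphi$ explicit, which the paper leaves tacit.
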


\begin{proof}
In \cite{WZ}, we proved that $\ell^2(\mathbb{Z,\mathcal{K}})$ is a Hilbert sequence space over $I:= \mathbb{N} \times \mathbb{Z}$ with an OP-basis $(e_{i,j})_{(i,j)\in I}.$ Where for $(i, j)\in I,$ $e_{i, j}:= (\ldots, z_{-1}, [z_0], z_1, \ldots) \in \ell^2(\mathbb{Z,\mathcal{K}})$ be defined by
\begin{eqnarray*}
z_{k} =\left\{\begin{array}{ll}
 f_{i} \;\;\;\; \mbox{if } k=j,\\
 \\
  0 \;\;\;\;\;\; \mbox{if } k \neq j.
  \end{array}\right.
\end{eqnarray*}

    By assumption, for each $1 \leq l \leq N$  $\{A_n^{(l)}\}_{n\in \mathbb{Z}}$ is a uniformly bounded sequence of positive invertible diagonal operators on $\mathcal{K},$ there exist a uniformly bounded sequence of positive sequences $\{(a_{i, n}^{(l)})_{i\in \mathbb{N}}\}_{n\in \mathbb{Z}}$
such that for each $n\in \mathbb{Z},$
$$A_{n}^{(l)}f_i = a_{i,n}^{(l)}f_i \;\mbox{ and }\; (A_{n}^{(l)})^{-1}f_i = (a_{i,n}^{(l)})^{-1}f_i  \;\;\;\mbox{  for  } i\in \mathbb{N}.$$
In this interpretation each $T_l$ is a weighted pseudo-shift $T_{b^{(l)}, \varphi}$ on $\ell^2(\mathbb{Z,\mathcal{K}})$ with
 $$b_{i,j}^{(l)} = a_{i, j-1}^{(l)}  \;\mbox{ and }\;  \varphi(i, j) = (i, j-1) \;\;\;\;\mbox{ for } (i, j) \in I.$$
 It follows from Corollary \ref{Pd-superbi} that $T_{1} ^{r_1}, T_{2} ^{r_2}, \ldots, T_{N} ^{r_N}$ are densely d-supercyclic if and only if there exists an increasing sequence $(n_k)$ of positive integers such that:

  For any $(i_1, j_1), (i_2, j_2) \in I$ and $1 \leq l, s\leq N,$
 \begin{eqnarray*}
&\;&\lim \limits_{k\rightarrow \infty} \left\|\left( \prod\limits_{v = 0}^{r_l n_k -1} b_{\varphi^v(i_1, j_1)} ^{(l)}\right)^{-1} e_{\varphi^{r_l n_k}(i_1, j_1)} \right\|
   \left\|\left( \prod\limits_{v = 1}^{r_s n_k} b_{\psi^v(i_2, j_2)} ^{(s)}\right) e_{\psi^{r_s n_k}(i_2, j_2)}\right\|\\
&\;&=\lim \limits_{k\rightarrow \infty}\left\| \left(\prod \limits_{v=0}^{r_ln_k-1}a_{(i_1, j_1-v-1)}^{(l)}\right)^{-1}e_{(i_1, j_1-r_ln_k)}\right\| \left\|\left(\prod \limits_{v=1}^{r_sn_k}a_{(i_2, j_2+v-1)}^{(s)}\right)e_ {(i_2, j_2+r_sn_k)}\right\| \\
&\;&=\lim\limits_{k\rightarrow \infty}\left\|\left(\prod\limits_{v=j_1-r_ln_k}^{j_1-1}(A_v^{(l)})^{-1}\right)f_{i_1}\right\| \left\|\left(\prod\limits_{v=j_2}^{j_2+r_sn_k-1}A_v^{(s)}\right) f_{i_2}\right\|=0.
\end{eqnarray*}

For every  $(i, j)\in I$ and $1 \leq s < l \leq N,$

\begin{eqnarray*}
&\;&\lim \limits_{k\rightarrow \infty} \left\|\left( \prod\limits_{v = 0}^{r_s n_k -1} b_{\varphi^v(i)} ^{(s)}\right)^{-1}
\left( \prod\limits_{v = 1}^{r_l n_k} b_{\psi^v (\varphi^{r_s n_k}(i))}^{(l)} \right) e_{\psi^{(r_l-r_s) n_k}(i)} \right\|\\
&\;&=\lim\limits_{k\rightarrow\infty}\left\|\left(\prod\limits_{v=j-r_sn_k}^{j-1}(A_v^{(s)})^{-1}\right)\left(\prod\limits_{v=j-r_sn_k}^{j+(r_l-r_s)n_k-1}A_v^{(l)}\right) f_{i}\right\|=0.
\end{eqnarray*}
and
\begin{eqnarray*}
&\;&\lim \limits_{k\rightarrow \infty} \left\|\left( \prod\limits_{v = 0}^{r_l n_k -1} b_{\varphi^v(i)} ^{(l)}\right)^{-1}
\left( \prod\limits_{v = 1}^{r_s n_k} b_{\psi^v (\varphi^{r_l n_k}(i))} ^{(s)}\right) e_{\varphi^{(r_l-r_s) n_k}(i)}\right\|\\
&\;&= \lim\limits_{k\rightarrow \infty}\left\|\left(\prod\limits_{v=j-r_ln_k}^{j-1}(A_v^{(l)})^{-1}\right)\left(\prod\limits_{v=j-r_ln_k}^{j-(r_l-r_s)n_k-1}A_v^{(s)} \right) f_{i}\right\|=0.
\end{eqnarray*}
 Which completes the proof.
\end{proof}

 In \cite{FS}, Feldman considered the hypercyclicity of bilateral weighted shifts on $\ell^2(\mathbb{Z})$ that are invertible. Motivated by Feldman's work, in \cite{WZ} we showed that if the weight sequence $\{A_n\}_{n=-\infty}^{\infty}$ are assumed to satisfy that there exists some $m > 0$ such that $||A_n^{-1}||\leq m$ for all $n < 0$ (or for all $n > 0$), then the characterizing conditions for d-hypercyclicity simplify. Now, we establish the d-supercyclic conditions for this special case. The proof is similar to that in \cite{WZ}, here we omit it.

\begin{corollary}\label{same}
Let $T$ be a bilateral forward operator weighted shift on $\ell^2(\mathbb{Z,\mathcal{K}})$ with weight sequence $\{ A_n\}_{n=-\infty}^{\infty},$ where $\{A_n\}_{n=-\infty}^{\infty}$ is a uniformly bounded sequence of positive invertible diagonal operators on $\mathcal{K},$ and there exists $m > 0$ such that $||A_n^{-1}||\leq m$ for all $n < 0$ (or for all $n > 0$).
Then for any integer $N\geq 2,$ the following are equivalent:

\begin{enumerate}
\item $T, T^{2}, \ldots, T^N$ are densely d-supercyclic.

\item There exists an increasing sequence $(n_k)_{k\geq 1}$ of positive integers such that:

For every $i_1, i_2 \in \mathbb{N}$,
\begin{eqnarray*}
 \lim\limits_{k\rightarrow \infty}\left\|\left(\prod\limits_{v=1}^{ln_k}(A_{-v})^{-1}\right) f_{i_1}\right\| \left\|\left(\prod\limits_{v=1}^{N n_k}A_{v}\right) f_{i_2}\right\| = 0 \;\;\;(1 \leq l \leq N).
 \end{eqnarray*}
 and
\begin{eqnarray*}
 \lim\limits_{k\rightarrow \infty}\left\|\left(\prod\limits_{v=1}^{N n_k}(A_{-v})^{-1}\right) f_{i_1}\right\| \left\|\left(\prod\limits_{v=1}^{l n_k}A_{v}\right) f_{i_2}\right\| = 0 \;\;\;(1 \leq l \leq N).
 \end{eqnarray*}
 For every $i\in \mathbb{N}$,
\begin{eqnarray*}
 \lim\limits_{k\rightarrow \infty}\left\|\left(\prod\limits_{v=1}^{ln_k}(A_{-v})^{-1}\right) f_{i}\right\| =  \lim\limits_{k\rightarrow \infty}\left\|\left(\prod\limits_{v=1}^{ln_k}A_{v}\right) f_{i}\right\| = 0 \;\;\;(1 \leq l \leq N-1).
 \end{eqnarray*}
\end{enumerate}
\end{corollary}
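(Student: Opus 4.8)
The plan is to obtain this as a specialization of Theorem \ref{forward}: I would set $A^{(l)}=A$ and $r_l=l$ for every $l$, and then use the one-sided boundedness hypothesis to strip away the dependence on the base indices $j,j_1,j_2$ occurring there. The first ingredient is purely algebraic. Since each $A_n$ is a positive diagonal operator, $A_n$ and $A_m^{-1}$ commute, so in the two limits of Theorem \ref{forward} indexed by $s<l$ the overlapping inverse and forward factors cancel on the window $[j-sn_k,j-1]$ (resp. on $[j-ln_k,\,j-(l-s)n_k-1]$), collapsing the double products to the single-sided products $\prod_{v=j}^{j+(l-s)n_k-1}A_v$ and $\prod_{v=j-(l-s)n_k}^{j-1}(A_v)^{-1}$, whose exponent $l-s$ runs exactly over $\{1,\dots,N-1\}$; these are the two limits in the second display of (2). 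The product-type limit of Theorem \ref{forward} is required for every pair $(s,l)\in\{1,\dots,N\}^2$ of (inverse, forward) exponents, and the interior pairs with $s,l\le N-1$ are automatic once the individual limits above are known (both factors tend to $0$ separately), so only the pairs with $s=N$ or $l=N$ survive; these are the two families in the first display of (2).

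Next I would remove the base indices using the hypothesis; say $\|A_n^{-1}\|\le m$ for all $n<0$, the case $n>0$ being symmetric. With $M:=\sup_n\|A_n\|$ this gives $1/m\le\|A_nf_k\|\le M$ for all $n<0$ and all $k$, so any inverse product over a window running off to $-\infty$ changes only by a factor lying in the fixed interval $[M^{-c},m^{c}]$ when the window is translated by a bounded amount $c$. Hence the limits involving $(A_v)^{-1}$ are insensitive to the base index and anchor at $v=-1$, giving the windows $[-ln_k,-1]$, i.e. $\prod_{v=1}^{ln_k}(A_{-v})^{-1}$; a rightward translation ($j\ge1$) of a forward window likewise costs only a bounded factor and anchors at $v=1$. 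The implication $(1)\Rightarrow(2)$ is then immediate, since restricting the all-index conditions of Theorem \ref{forward} to $j=j_1=0$ and $j_2=1$ reproduces precisely the anchored conditions displayed in (2).

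The hard part will be the converse, namely recovering the all-index conditions of Theorem \ref{forward} from the anchored ones: a leftward translation of a forward window reaching $+\infty$ is not tamed by the hypothesis, and in fact the anchored forward limit need not upgrade termwise along the same sequence $(n_k)$. I would handle this exactly as in \cite{WZ}, exploiting that only the \emph{existence} of an increasing sequence is asserted: one passes to a suitable subsequence that centres the forward windows where the partial products are small and remain small under bounded shifts, so that the forward limits hold at every base index, while the tame inverse side and the rightward forward shifts are already under control. This re-anchoring on the unbounded side is the essential obstacle; once it is in place, the hypotheses of Theorem \ref{forward} are met and the stated equivalence follows.
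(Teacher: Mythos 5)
The paper does not actually prove this corollary --- it writes ``The proof is similar to that in \cite{WZ}, here we omit it'' --- so there is no internal argument to compare you against; your route (specialize Theorem \ref{forward} to $A^{(l)}=A$ and $r_l=l$, cancel the overlapping diagonal factors, and use the one-sided bound to translate windows) is certainly the intended one, and your bookkeeping for it is correct. The two double products do collapse, by commutativity of the positive diagonal weights, to $\prod_{v=j}^{j+(l-s)n_k-1}A_v$ and $\prod_{v=j-(l-s)n_k}^{j-1}(A_v)^{-1}$ with $l-s$ running over $1,\dots,N-1$; the pair conditions with both exponents at most $N-1$ do follow from the individual limits, leaving only the pairs involving $N$; on the side where $\|A_n^{-1}\|\le m$ holds, translating a window by a fixed amount changes the product by a factor bounded above and below uniformly in $k$; and $(1)\Rightarrow(2)$ is indeed mere restriction of the all-index conditions of Theorem \ref{forward} to the anchors $j_1=0$, $j_2=1$.

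The gap is in $(2)\Rightarrow(1)$, precisely at the step you flag and then defer to \cite{WZ}. The one concrete device you commit to --- passing to a subsequence of $(n_k)$ --- provably fails. Take $N=2$, scalar weights with $a_{-v}=2$ for $v\ge 1$ (so $\|A_n^{-1}\|=\tfrac12$ for $n<0$), $a_{n_k}=2^{-k}$, $a_{n_k+1}=\cdots=a_{n_k+k}=2$, all other nonnegative-index weights equal to $1$, and $n_{k+1}>2n_k+k$. Writing $F(n)=\prod_{v=1}^{n}a_v$, one checks $F(n_k)=2^{-k}$, $F(2n_k)=1$, and $\prod_{v=1}^{ln_k}a_{-v}^{-1}=2^{-ln_k}$, so every limit in statement (2) holds along $(n_k)$; yet $F(n_k-1)=1$ for all $k$, so the de-anchored forward condition at base index $j=0$ required by Theorem \ref{forward} (namely $\|\prod_{v=0}^{n_k-1}A_vf_i\|\to 0$) fails along every subsequence of $(n_k)$. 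It holds only along a genuinely different sequence, e.g.\ $m_k=n_k+\lfloor k/2\rfloor$, chosen to sit inside the ``dip'' of the partial products. So the essential content of the hard direction is the construction, from the anchored data alone, of a new sequence along which the forward windows stay small under all bounded shifts, simultaneously for all exponents $t\le N-1$ and compatibly with the mixed product conditions. Your proposal correctly names this obstacle but supplies no such construction, and the mechanism it does name cannot work; as written, $(2)\Rightarrow(1)$ is not established.
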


\begin{example}
For each $s\in \mathbb{N},$ let
 $$ \mathcal{C}_s = \{ 2^{2s+1}-(2s+1),\ldots, 2^{2s+1}-1\}, $$
 $$\mathcal{D}_s =\{2^{2s+1} ,\ldots, 2^{2s+1}+(2s+1)-1\}$$
 and
 $$ \mathcal{C} = \bigcup\limits_{s=0}^\infty \mathcal{C}_s, \; \mathcal{D} = \bigcup\limits_{s=0}^\infty \mathcal{D}_s, \; \mathcal{E} =\bigcup\limits_{s=0}^\infty \{-2^{2s+1}\}.$$

 Let $\{ A_n\}_{n=-\infty}^{\infty}$ be a uniformly bounded sequence of positive invertible diagonal operators on $\mathcal{K},$ defined as follows:
 \begin{eqnarray*}
&\mbox{If }& n\in \mathcal{C} : A_n(f_k)=\left\{\begin{array}{ll} \frac{1}{2}f_k,\;\;\;\;0\leq k \leq n,\\
\\
  f_k,\;\;\;\;\;k > n.
 \end{array}\right.\\
 \\
 &\mbox{If }& n\in \mathcal{D} : A_n(f_k)=\left\{\begin{array}{ll} 2f_k,\;\;\;\;0\leq k \leq n,\\
\\
  f_k,\;\;\;\;\;k > n.
 \end{array}\right.\\
 \\
&\mbox{If }& n\in \mathcal{E}  : A_n(f_k)=\left\{\begin{array}{ll} 2f_k,\;\;\;\;0\leq k \leq -n,\\
\\
  f_k,\;\;\;\;\;k > -n.
 \end{array}\right.\\
\\
&\mbox{If }& n\in \mathbb{Z}\backslash (\mathcal{C}\cup \mathcal{D}\cup\mathcal{E}) :
A_n(f_k) = f_k\;\;\;\;\mbox{for all } k\geq 0.
\end{eqnarray*}
Let $T$ be the bilateral forward operator weighted shift on $\ell^2(\mathbb{Z,\mathcal{K}})$ with weight sequence $\{ A_n\}_{n=-\infty}^{\infty}.$  Then $T, T^{2}$ are d-supercyclic.
\end{example}

\begin{proof}
Notice that for any $n\in \mathbb{Z},$ $\frac{1}{2} \leq \|A_n\| \leq 2,$ we use Corollary \ref{same} to give the proof. Let $(n_k)_{k\geq 1} = (2^{2k+1})_{k\geq 1}.$ Then for each $i\in \mathbb{N},$
\begin{eqnarray*}
 \left\|\left(\prod\limits_{v=1}^{n_k}(A_{-v})^{-1}\right) f_{i}\right\|\leq (\frac{1}{2})^{k-i+1}\rightarrow 0 \mbox{ as } k\rightarrow \infty
  \end{eqnarray*}
and
  \begin{eqnarray*}
  \left\|\left(\prod\limits_{v=1}^{n_k}A_{v}\right) f_{i}\right\|\leq (\frac{1}{2})^{2k-i}\rightarrow 0 \mbox{ as } k\rightarrow \infty.
 \end{eqnarray*}
Since $2^{2k+1}+(2k+1)-1< 2\cdot2^{2k+1}<2^{2k+3}-(2k+3),$
\begin{eqnarray}\label{4.1}
 (\frac{1}{2})^i\leq \left\|\left(\prod\limits_{v=1}^{2n_k}A_{v}\right) f_{i}\right\|\leq (\frac{1}{2})^{2k-i}\leq 2^i,
 \end{eqnarray}
 hence for any $i_1, i_2 \in I,$
 \begin{eqnarray*}
 \lim\limits_{k\rightarrow \infty}\left\|\left(\prod\limits_{v=1}^{n_k}(A_{-v})^{-1}\right) f_{i_1}\right\| \left\|\left(\prod\limits_{v=1}^{2 n_k}A_{v}\right) f_{i_2}\right\|=0,
 \end{eqnarray*}
 also by the fact $2^{2k+1}< 2\cdot2^{2k+1}<2^{2k+3},$ it is easy to see that
 \begin{eqnarray*}
 \lim\limits_{k\rightarrow \infty}\left\|\left(\prod\limits_{v=1}^{2n_k}(A_{-v})^{-1}\right) f_{i_1}\right\| \left\|\left(\prod\limits_{v=1}^{n_k}A_{v}\right) f_{i_2}\right\| = 0
 \end{eqnarray*}
 and
 \begin{eqnarray*}
 \lim\limits_{k\rightarrow \infty}\left\|\left(\prod\limits_{v=1}^{2n_k}(A_{-v})^{-1}\right) f_{i_1}\right\| \left\|\left(\prod\limits_{v=1}^{2n_k}A_{v}\right) f_{i_2}\right\| = 0.
 \end{eqnarray*}
 It follows from Corollary \ref{same}, $T, T^2$ are d-supercyclic.

But it follows from \eqref{4.1} that $\left\|\left(\prod\limits_{v=1}^{2n_k}A_{v}\right) f_{i}\right\| \nrightarrow 0 \mbox{ as } k\rightarrow \infty.$ By Theorem  \ref{d-hyper1} $T, T^{2}$ are not d-hypercyclic.
\end{proof}

\bibliographystyle{amsplain}

\end{document}